\newcommand{\bbC}{{\mathbb{C}}}
\newcommand{\bbR}{{\mathbb{R}}}
\newcommand{\calJ}{{\mathcal J}}
\newcommand{\bdnot}{{\boldsymbol{0}}}
\newcommand{\Arg}{\text{\rm{Arg}}}
\newcommand{\spann}{\text{\rm{span}}}
\newcommand{\rank}{\text{\rm{rank}}\,}
\newcommand{\beq}{\begin{equation}}
\newcommand{\eeq}{\end{equation}}
\newcommand{\ba}{\begin{align*}}
\newcommand{\ea}{\end{align*}}
\DeclareMathOperator{\re}{Re}
\DeclareMathOperator{\im}{Im}
\DeclareMathOperator*{\res}{Res}
\numberwithin{equation}{section}
\newtheorem{theorem}{Theorem}
\newtheorem{lemma}{Lemma}
\newtheorem{proposition}{Proposition}
\newtheorem{corollary}{Corollary}
\newtheorem{definition}{Definition}
\theoremstyle{remark}
\newtheorem*{remarks}{Remarks}
\newtheorem*{remark}{Remark}
\begin{document}

\title[Rank one non-Hermitian perturbations of Hermitian $\beta$-ensembles]{Rank one non-Hermitian perturbations of Hermitian $\beta$-ensembles  of random matrices}
\author{Rostyslav Kozhan}
\address{Rostyslav Kozhan\\
         Uppsala University\\
         Uppsala, Sweden}
\email{kozhan@math.uu.se}
\thanks{The research was partly funded by the grant KAW 2010.0063 from the Knut and Alice Wallenberg Foundation}

\begin{abstract}
For any $\beta>0$, we provide a tridiagonal matrix model and compute the joint eigenvalue density of a random rank one non-Hermitian perturbation of Gaussian and Laguerre $\beta$-ensembles of random matrices.
\end{abstract}

\maketitle

\section{Introduction}

The energy Hamiltonian of a closed quantum system is usually modelled by a Hermitian random matrix $H$. 
The Hamiltonian of this system after coupling it to the outer world via $s$ open channels is modelled in the physics literature by the so-called effective Hamiltonian\footnote{In the physics literature it is more common to take $H - i \Gamma$, which can be reduced to our case by a simple symmetry.}
\begin{equation}\label{eff}
H_{eff}=H + i \Gamma,
\end{equation}
where $\Gamma\ge \bdnot$ is a rank $s$ positive semi-definite Hermitian matrix that is independent of $H$. In this paper we are concerned with the exact joint eigenvalue distribution of~\eqref{eff} when there is one open channel ($\rank \Gamma= s=1$), and $H$ is a Gaussian orthogonal/unitary/symplectic or Wishart orthogonal/unitary/symplectic random matrix. The law of $\Gamma$ may be any continuous distribution, which is assumed to be given. We obtain tridiagonal models (in the spirit of Dumitriu--Edelman~\cite{DE}) and compute the joint eigenvalue distribution for any $\beta>0$, not merely $\beta=1,2,4$.

Such ensembles are of active interest in the literature due to the numerous physical applications (see, e.g., the review papers~\cite{FyoSav,NuclearPhys,FyoSom} and references therein).

The problem of computing the exact joint eigenvalue density
of rank one non-Hermitian perturbations of Gaussian ensembles was considered in the physics literature in the papers of Ullah~\cite{Ull69} (for the case $\beta=1$), Sokolov--Zelevinsky~\cite{SokZel} ($\beta=1$), St\"ockmann--\v{S}eba~\cite{StoSeb} ($\beta=1,2$), Fyodorov--Khoruzhenko~\cite{FyoKho99} ($\beta=2$). The present paper provides a rigorous proof of this result (e.g., none of these papers addressed the question of the space of all attainable configurations of eigenvalues, which can be subtle, see below the case for Laguerre ensembles). Moreover, we obtain a generalization for any $\beta>0$ and for any continuous distribution of $\Gamma$.

Let us also mention that the asymptotic analysis of these perturbations are also of high interest in the mathematics and physics literature and have been studied in~\cite{FyoSom96,FyoSom97,FyoSom,SFT99}, see also~\cite{ORoWoo,Roc15}.

The joint eigenvalue density for rank one non-Hermitian perturbations of Wishart (Laguerre) ensembles has not appeared before neither in the mathematics nor physics literature. We treat all cases of $\beta>0$, $m$, and $n$ (we stress that cases $m < n$ and $m > n$ have drastically different behaviours here), and $\Gamma$.

%
%
%
%

The current paper is the Hermitian counterpart of the unitary results from~\cite{KK_truncations} (joint work with R.~Killip). The important cornerstones in the proof are the Dumitriu--Edelman matrix models ~\cite{DE}, and Arlinski{\u\i}--Tsekanovski{\u\i}'s  spectral analysis of (deterministic) Jacobi matrices with rank one imaginary part~\cite{AT}.

We note that our methods can provide matrix models (namely, \textit{block} Jacobi matrices with independent (matrix-valued) Jacobi coefficients) for higher order perturbations $s\ge 2$ as well, which could prove to be useful for computing their eigenvalue density (for the case  $\beta=2$, $s\ge2$, Fyodorov--Khoruzhenko~\cite{FyoKho99} provide another approach). We leave this as a challenging open problem.

\smallskip

\textbf{Acknowledgements}: It is a pleasure to thank Rowan Killip, Yan Fyodorov, Boris Khoruzhenko,  and Dmitry Savin 
for useful discussions and help with the references. The majority of work was done during the author's stay at the Royal Institute of Technology (Stockholm). The author is grateful to the Department of Mathematics, and especially Kurt Johansson, for the hospitality.

\section{Preliminaries}
\subsection{Gaussian and Wishart ensembles}


\begin{definition}
We say that a real-valued random variable $($r.v.$)$ $\xi$ is $N (0,\sigma)$-distributed, and we write $\xi\sim N (0,\sigma)$,  if its probability distribution function $($p.d.f.$)$ is $\tfrac{1}{\sqrt{2\pi \sigma^2}} e^{-x^2/2\sigma^2}$.


We say that a complex-valued r.v. $\xi$ is $N (0,\sigma \mathbf{I}_2)$-distributed $($where $\mathbf{I}_k$ is the $k
\times k$ identity matrix$)$ if $\re \xi$ and $\im \xi$ are independent and each distributed according to $N (0,\sigma)$.


We say that a quaternion-valued r.v. $\xi$ is $N(0,\sigma \mathbf{I}_4)$-distributed if $\xi = \xi_1 + \xi_2 \mathsf{i} + \xi_3 \mathsf{j} + \xi_4 \mathsf{k}$ and $\xi_1,\ldots,\xi_4$ are independent and each distributed according to $N (0,\sigma)$.

We say that a real-valued r.v. $\xi$ is $\chi^2_k$ distributed $(k>0)$ if its p.d.f. is $\tfrac1{2^{k/2}\Gamma(k/2)} x^{k/2-1}e^{-x/2}$. For integer $k>0$ this can be realized by the sum of squares of $k$ independent $N(0,1)$ variables.

We say that a real-valued r.v. $\xi$ is $\chi_k$ distributed $(k>0)$ if it can be realized as the square root of a $\chi^2_k$ random variable. Its p.d.f. is $\tfrac{2^{1-k/2}}{\Gamma(k/2)} x^{k-1}e^{-x^2/2}$. 

We say that a real-valued r.v. $\xi$ is $\tilde\chi_k$ distributed $(k>0)$ if its p.d.f. is $\tfrac{2}{\Gamma(k/2)} x^{k-1}e^{-x^2}$ $($this coincides with $\tfrac{1}{\sqrt2} \chi_k$ distribution$)$.

\end{definition}

\begin{definition}\label{gue}
Let $Y$ be an $n\times n$ matrix with independent identically distributed $($i.i.d.$)$ entries chosen from $N(0,1)$, $N(0, \mathbf{I}_2)$, or $N(0, \mathbf{I}_4)$. 
Then we say that $X=\tfrac{1}{2} (Y + Y^*)$ belongs to the Gaussian orthogonal/unitary/symplectic ensemble, respectively. We denote it by $GOE_n$, $GUE_n$, $GSE_n$, respectively.
\end{definition}

\begin{definition}\label{lue}
Let $Y$ be an $m\times n$ matrix with i.i.d. entries chosen from $N(0,1)$, $N(0, \mathbf{I}_2)$, or $N(0, \mathbf{I}_4)$.  
Then we say that the $n\times n$ matrix $X=Y^* Y$ belongs to the Wishart orthogonal/unitary/symplectic ensemble, respectively. We denote it by $LOE_{(m,n)}$, $LUE_{(m,n)}$, $LSE_{(m,n)}$, respectively.
\end{definition}

To avoid confusion, we stress that $LOE_{(m,n)}/LUE_{(m,n)}/LSE_{(m,n)}$ ensembles consist of $n\times n$ matrices. 

\subsection{Tridiagonalization of Hermitian matrices}\label{ssTridiagonalization}


Let $H$ be an $n \times n$ Hermitian matrix. Let us describe a process that we will call the tridiagonalization procedure. 

Denote $\mathbf{e}_j$ to be the $j$-th standard vector in $\bbC^n$, that is, having $1$ in its $j$-th entry and $0$ everywhere else. Let $\langle \mathbf{x}, \mathbf{y} \rangle := \mathbf{x}^* \mathbf{y}$, the usual inner product in $\bbC^n$.


Let us apply the Gram--Schmidt orthogonalization procedure in $\bbC^n$ to the sequence of vectors $\mathbf{e}_1, H \mathbf{e}_1, H^2 \mathbf{e}_1,\ldots, H^{k-1} \mathbf{e}_1$, where $k= \dim \spann\{ H^j \mathbf{e}_1: j\ge 0 \}$. Note that $1\le k\le n$. After normalization we obtain an orthonormal sequence of vectors $\mathbf{v}_1,\ldots,\mathbf{v}_k$ in $\bbC^n$. If $k< n$, then we choose an arbitrary unit vector $\mathbf{v}_{k+1}$ in $\bbC^n \ominus \spann\{ \mathbf{v}_1,\ldots,\mathbf{v}_k \}$ and repeat the procedure but with $\mathbf{v}_{k+1}$ instead of $\mathbf{e}_1$. By repeating this procedure finitely many times more if necessary and combining all the resulting vectors together, we obtain an orthonormal basis  $\{\mathbf{v}_j\}_{j=1}^n$ of $\bbC^n$.

Standard arguments (see, e.g.,~\cite[Sect 1.3]{Rice}) show that the matrix of $H$ in the basis $\{\mathbf{v}_j\}_{j=1}^n$ is tridiagonal. In other words, if we form unitary matrix $S$ with $\{\mathbf{v}_j\}_{j=1}^n$ as its columns, then $S H S^* = \calJ$, where
\begin{equation}\label{jacobi}
\calJ = S H S^* = \left(
\begin{array}{ccccc}
b_1&a_1&0& &\\
a_1&b_2&a_2&\ddots &\\
0&a_2&b_3&\ddots & 0 \\
 &\ddots&\ddots&\ddots & a_{n-1} \\
 & & 0 & a_{n-1} & b_n
\end{array}\right), \quad a_j \ge 0, b_n\in\bbR.
\end{equation}
We call matrices of the form~\eqref{jacobi} Jacobi, and the coefficients $\{a_j,b_j\}$ --- their Jacobi coefficients. For a future reference, observe that
\begin{equation}\label{fixE1}
S \mathbf{e}_1 = S^* \mathbf{e}_1 =  \mathbf{e}_1
\end{equation}
since $\mathbf{v}_1 = \mathbf{e}_1$ in the Gram--Schmidt procedure. Note that in the tridiagonalization procedure above, if $\dim \spann\{ H^j \mathbf{e}_1: j\ge 0 \} = k < n$, then $a_k = 0$, i.e., $\calJ$ becomes a direct sum of Jacobi matrices.

\subsection{Matrix models for Gaussian and Wishart ensembles}\label{ssDE}
Now let us apply the tridiagonalization procedure from the previous section to a random matrix from a Gaussian or a Wishart ensemble.

If $H$ is from $GOE_n$, $GUE_n$, or $GSE_n$, then $\mathbf{e}_1$ is a cyclic vector for $H$ with probability $1$. Therefore we obtain~\eqref{jacobi} with $a_j>0$ for all $1\le j\le n-1$.

The same is true for a random matrix $H$ from $LOE_{(m,n)}$, $LUE_{(m,n)}$, or $LSE_{(m,n)}$, but only if $m \ge n$. If $m<n$, then  with probability $1$,  $\dim \spann\{ H^j \mathbf{e}_1: j\ge 0 \} = m +1 \le n$ , and $\bbC^n \ominus \spann\{ H^j \mathbf{e}_1: j\ge 0 \} \subseteq \ker H$, so that the resulting Jacobi matrix~\eqref{jacobi} that we obtain has $a_{m+1} = \ldots = a_{n-1} = 0$, $b_{m+2} = \ldots = b_n=0$. In other words, we have that $\calJ$ is the direct sum of an $(m+1)\times (m+1)$ Jacobi matrix and the $(n-m-1)\times (n-m-1)$ zero matrix. The proof of this case can be done by following the Dumitriu--Edelman~\cite{DE} arguments.

\begin{lemma}[Dumitriu--Edelman~\cite{DE}]\label{DE1}
Let $H$ be a $GOE_n$, $GUE_n$, or $GSE_n$ matrix. 
There exists a unitary matrix $S$ satisfying~\eqref{fixE1} such that $S H S^* = \calJ $ is tridiagonal~\eqref{jacobi}, where
\begin{align*}
a_j & \sim \tilde\chi_{\beta(n-j)},  & 1\le j\le n-1, \\
b_j & \sim N(0,1),  & 1\le j\le n,
\end{align*}
where $\beta =1,2,4$ for $GOE_n$, $GUE_n$, $GSE_n$, respectively.
\end{lemma}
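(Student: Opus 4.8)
The plan is to carry out the Gram--Schmidt tridiagonalization of $H$ explicitly, one column at a time, and track how the distributions of the Jacobi coefficients $\{a_j, b_j\}$ evolve. By the construction in Section~\ref{ssTridiagonalization}, after normalization we obtain orthonormal vectors $\mathbf{v}_1=\mathbf{e}_1, \mathbf{v}_2, \ldots, \mathbf{v}_n$, and in this basis $b_j = \langle \mathbf{v}_j, H\mathbf{v}_j\rangle$ while $a_j = \langle \mathbf{v}_{j+1}, H\mathbf{v}_j\rangle = \|P_j^\perp H\mathbf{v}_j\|$, where $P_j^\perp$ is the orthogonal projection onto $(\spann\{\mathbf{v}_1,\ldots,\mathbf{v}_j\})^\perp$. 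The first step is the base case: $b_1 = \langle \mathbf{e}_1, H\mathbf{e}_1\rangle = X_{11}$, which is $N(0,1)$ in all three cases by Definition~\ref{gue} (the diagonal entry of $\tfrac12(Y+Y^*)$ is $\re Y_{11}$, which is $N(0,1)$ for $\beta=1$; for $\beta=2,4$ one checks the normalization gives the same), and $a_1 = \|(\mathbf{I}-\mathbf{e}_1\mathbf{e}_1^*)H\mathbf{e}_1\|$ is the norm of the $(n-1)$-dimensional vector of remaining entries of the first column, whose squared norm is a sum of $\beta(n-1)$ i.i.d. $N(0,\tfrac12)$ squares, i.e. $a_1 \sim \tilde\chi_{\beta(n-1)}$.

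The key structural input is the classical fact that the Gaussian ensembles are invariant under conjugation by the relevant group ($O(n)$, $U(n)$, $Sp(n)$). I would invoke this to argue that, conditionally on $\mathbf{v}_1,\ldots,\mathbf{v}_j$ having been determined, the compression of $H$ to the orthogonal complement $\spann\{\mathbf{v}_1,\ldots,\mathbf{v}_j\}^\perp$ is again a Gaussian ensemble matrix of size $n-j$, independent of the already-computed coefficients $b_1,\ldots,b_j$ and $a_1,\ldots,a_{j-1}$. The cleanest way to see this is: the first $j$ columns $\mathbf{v}_1,\ldots,\mathbf{v}_j$ depend only on $H\mathbf{e}_1,\ldots,H^{j-1}\mathbf{e}_1$; conditioning on these, the remaining randomness of $H$ lives in the lower-right $(n-j)\times(n-j)$ block in a suitable basis, and Gaussian-ensemble structure is preserved under orthogonal/unitary/symplectic change of basis and restriction to a block. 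Then the inductive step reduces verbatim to the base case: $b_{j+1}$ is the $(1,1)$ entry of an $(n-j)$-dimensional Gaussian-ensemble matrix, hence $N(0,1)$, and $a_{j+1}$ is the norm of the off-diagonal part of its first column, hence $\tilde\chi_{\beta(n-j-1)}$; moreover these are independent of everything determined earlier, and independent of each other since $a_{j+1}$ depends only on the off-diagonal first column while $b_{j+1}$ depends only on the $(1,1)$ entry. Iterating gives the full list of distributions and their mutual independence, and \eqref{fixE1} holds by construction since $\mathbf{v}_1 = \mathbf{e}_1$.

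The main obstacle is making the conditional-invariance argument fully rigorous, particularly in the symplectic ($\beta=4$) case where one must be careful about the quaternionic structure: the compression of a $GSE_n$ matrix to a quaternionic subspace is $GSE_{n-j}$, but one needs to verify that the subspace $\spann\{\mathbf{v}_1,\ldots,\mathbf{v}_j\}^\perp$ is indeed a quaternionic subspace (equivalently, that the Krylov vectors can be chosen compatibly with the quaternionic structure), and that the Dumitriu--Edelman chi-parameter $\beta(n-j)$ comes out right — here the factor $\beta$ records the real dimension of the scalar field. A careful bookkeeping of real degrees of freedom in each off-diagonal column (namely $\beta$ real parameters per entry, $n-j-1$ entries, giving $\beta(n-j-1)$ real $N(0,\tfrac12)$ variables, whose root-sum-of-squares is $\tilde\chi_{\beta(n-j-1)}$ by the definition of $\tilde\chi$) settles it. Since this is precisely the content of~\cite{DE}, I would present the argument in the streamlined conditional form above and refer to~\cite{DE} for the $\beta$-generic analogue and the explicit Jacobian computations.
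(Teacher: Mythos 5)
The paper does not prove this lemma at all --- it is quoted verbatim from Dumitriu--Edelman~\cite{DE} --- so there is nothing internal to compare against; your proposal is essentially a reconstruction of the standard Trotter/Dumitriu--Edelman argument (one-step Householder reduction plus invariance of the ensemble under the relevant compact group), and it is correct. The only place I would tighten the write-up is the conditioning step: rather than conditioning on the whole Krylov flag $H\mathbf{e}_1,\ldots,H^{j-1}\mathbf{e}_1$ at once, run the induction one column at a time, conjugating by a reflection that fixes $\spann\{\mathbf{v}_1,\ldots,\mathbf{v}_j\}$ and depends measurably only on the current first column of the lower-right block; invariance then gives that the new lower-right block is again a Gaussian ensemble of size $n-j$, independent of $(a_1,b_1,\ldots,a_j,b_j)$, and uniqueness of the Jacobi form with $a_j\ge 0$ and $\mathbf{v}_1=\mathbf{e}_1$ guarantees this produces the same coefficients as the Gram--Schmidt construction of Subsection~\ref{ssTridiagonalization}. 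Your variance bookkeeping ($H_{j1}=\tfrac12(Y_{j1}+\overline{Y_{1j}})$ has real components of variance $\tfrac12$, giving $a_1\sim\tfrac{1}{\sqrt2}\chi_{\beta(n-1)}=\tilde\chi_{\beta(n-1)}$) is exactly right.
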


\begin{lemma}[Dumitriu--Edelman~\cite{DE}]\label{DE2}
Let $H$ be a $LOE_{(m,n)}$, $LUE_{(m,n)}$, or $LSE_{(m,n)}$ matrix. 
There exists a unitary matrix $S$ satsifying~\eqref{fixE1} such that $S H S^* = \calJ= B^* B$ is tridiagonal~\eqref{jacobi}, where
\begin{equation}\label{bidiag}
B = \left(
\begin{array}{ccccc}
x_1&y_1&0& &\\
0&x_2&y_2&\ddots &\\
0&0&x_3&\ddots & 0 \\
 &\ddots&\ddots&\ddots & y_{n-1} \\
 & & 0 & 0 & x_n
\end{array}\right), \quad \mbox{with }
\end{equation}
(i) If $m \ge n$:
\begin{align*}
x_j & \sim \chi_{\beta(m-j+1)},  & 1\le j\le n, \\
y_j & \sim \chi_{\beta(n-j)},  & 1\le j\le n-1;
\end{align*}
(ii) If $m \le  n-1$:
\begin{align*}
x_j & \sim \begin{cases} \chi_{\beta(m-j+1)},  & \mbox{if } 1\le j\le m,   \\ 0 ,  & \mbox{if } m+1\le j\le n, \end{cases} \\
y_j & \sim \begin{cases} \chi_{\beta(n-j)},  & \mbox{if } 1\le j\le m,  \\  0 ,  & \mbox{if } m+1\le j\le n-1 ; \end{cases}
\end{align*}
where $\beta =1,2,4$ for $LOE_{(m,n)}$, $LUE_{(m,n)}$, $LSE_{(m,n)}$, respectively.
\end{lemma}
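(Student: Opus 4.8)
The plan is to mirror the proof of Lemma~\ref{DE1}, using the fact that a Wishart matrix $X = Y^*Y$ is naturally the square of an (essentially) bidiagonal object. First I would observe that it suffices to bidiagonalize $Y$ rather than tridiagonalize $X$ directly: if I can find unitaries $U$ (acting on $\bbC^m$) and $S$ (acting on $\bbC^n$, satisfying $S\mathbf{e}_1 = \mathbf{e}_1$) so that $UYS^* = B$ is upper bidiagonal of the form~\eqref{bidiag}, then $SXS^* = S Y^* Y S^* = (UYS^*)^*(UYS^*) = B^*B$ is automatically tridiagonal, and the constraint~\eqref{fixE1} is exactly the one needed to match Section~\ref{ssTridiagonalization}. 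So the real content is a bidiagonalization (Golub--Kahan / Householder) procedure for $Y$ together with a computation of the resulting distributions.

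The key steps, in order. (1) Column step: apply a Householder reflection $S_1^*$ on the right to $Y$, designed to fix $\mathbf{e}_1$ in $\bbC^n$ and to clear all entries in the first row of $Y$ except the $(1,1)$ entry; because the first row of $Y$ is a vector of $n$ i.i.d. Gaussians (real/complex/quaternion), rotational invariance of the Gaussian makes the new $(1,1)$ entry a $\chi_{\beta n}$ variable — wait, here one must be careful: the bidiagonal form~\eqref{bidiag} puts $x_1$ at position $(1,1)$ with $x_1 \sim \chi_{\beta(m)}$ when... let me instead do the row step first. (1$'$) Row step: left-multiply by a Householder reflection $U_1$ that clears the first column of $Y$ below the $(1,1)$ entry; the first column has $m$ i.i.d. Gaussian entries, so its norm is $\chi_{\beta m}$, giving $x_1 \sim \chi_{\beta m} = \chi_{\beta(m-1+1)}$. (2) Column step: right-multiply by a Householder $S_1^*$ fixing $\mathbf{e}_1$ and clearing the first row to the right of the $(1,2)$ entry; that row now has $n-1$ i.i.d. Gaussian entries, so $y_1 \sim \chi_{\beta(n-1)}$. (3) The remaining lower-right $(m-1)\times(n-1)$ block is, by the rotational invariance of the Gaussian and independence, again a matrix of i.i.d. Gaussians of the same type, independent of everything produced so far; induct. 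This yields part (i) with all variables independent. For part (ii), when $m \le n-1$ the procedure exhausts the $m$ available Gaussian ``rows'' after $m$ steps: the last nonzero $x$ is $x_m \sim \chi_{\beta(m-m+1)} = \chi_\beta$, the block then becomes a $0 \times (n-m)$ (empty) times something, forcing $x_{m+1} = \dots = x_n = 0$ and $y_{m+1} = \dots = y_{n-1} = 0$, exactly as in the statement, and $\calJ = B^*B$ splits as the direct sum of an $(m+1)\times(m+1)$ Jacobi block (built from $x_1,\dots,x_m,y_1,\dots,y_m$, hence with $a_{m+1}=0$) and an $(n-m-1)\times(n-m-1)$ zero block, matching the discussion preceding the lemma.

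The main obstacle I expect is twofold and bookkeeping-heavy rather than conceptual. First, one must justify carefully that the Householder reflections can be chosen \emph{measurably} in $Y$ and that the residual block is genuinely distributed as a fresh i.i.d. Gaussian matrix \emph{independent} of the already-extracted coefficients — this is the standard but slightly delicate ``peeling'' argument, relying on the invariance of the i.i.d. Gaussian law under the orthogonal/unitary/symplectic group and on the fact that a Householder reflection determined by a vector is independent of that vector's direction once we condition on its length. In the quaternionic ($\beta=4$) case one additionally has to handle noncommutativity and the $2\times 2$ complex-matrix representation of quaternions, tracking that norms of quaternionic Gaussian vectors of length $\ell$ are $\chi_{4\ell}$; this is where the factor $\beta$ enters uniformly. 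Second, one must verify the Jacobian/degrees-of-freedom count closes (no lost or duplicated randomness), and check the degenerate boundary cases ($k$-dimensional vectors with $k$ a single Gaussian) so that the $\chi_\beta$ terminal coefficients come out right. Since this is precisely the content of~\cite{DE}, I would present the argument concisely and refer to~\cite{DE} for the full details of the measurability and independence claims.
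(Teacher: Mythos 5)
Your proposal is correct and is essentially the argument of Dumitriu--Edelman~\cite{DE}, which is exactly what the paper relies on here (the lemma is stated as a citation, with the $m<n$ case noted as following ``the Dumitriu--Edelman arguments''): alternating left/right Householder reflections fixing $\mathbf{e}_1$, with rotational invariance of the Gaussian yielding the independent $\chi$-distributed coefficients and the correct degeneration when $m\le n-1$. You correctly caught the one point where the order matters (row step before column step, so that $x_1\sim\chi_{\beta m}$ rather than $\chi_{\beta n}$), and the remaining issues you flag (measurable choice of reflections, independence of the residual block, the quaternionic case) are indeed the only delicate points and are handled in~\cite{DE}.
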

\begin{remarks}
1. For $GSE_n$ and $LSE_{(m,n)}$ every entry is quaternionic, so all the instances of $\bbC$ in the arguments above should be replaced with the algebra of quaternions. The resulting coefficients $a_j$, $b_j$, $x_j$, $y_j$ in Lemmas~\ref{DE1}, \ref{DE2} are quaternionic too, but with the $\mathsf{i}$, $\mathsf{j}$, and $\mathsf{k}$ parts equal to zero. 

2. We adopt a different notation from the one used in~\cite{DE}: the roles of $a_j$'s and $b_j$'s are switched; the orderings of $a_j$, $b_j$, $x_j$, $y_j$ have been reversed; Wishart ensembles are taken to be $W^* W$ instead of $W W^*$.
\end{remarks}

\subsection{Gaussian and Laguerre $\beta$-ensembles}\label{ssBetaEnsembles}
The tridiagonal matrix ensembles from Lemmas~\ref{DE1} and~\ref{DE2} make sense for any $0<\beta<\infty$, not merely for $\beta=1,2,4$. We will call them Gaussian $\beta$-ensembles $G\beta E_n$ and Laguerre $\beta$-ensembles $L\beta E_{(m,n)}$, respectively.

%

\subsection{Spectral measures of Gaussian and Laguerre $\beta$-ensembles}
By the Riesz representation theorem, for any Hermitian matrix $H$ there exists a probability measure $\mu$ satisfying
\begin{equation}\label{spectralT}
\langle \mathbf{e}_1, H^k  \mathbf{e}_1 \rangle = \int_\bbR x^k d\mu(x), \quad \mbox{for all } k\ge 0.
\end{equation}
We call $\mu$ the spectral measure of $H$ corresponding to the vector $\mathbf{e}_1$.

In fact, any Hermitian can be unitarily diagonalized, so that we can write $H = U D U^*$, where $D$ is the diagonal matrix with eigenvalues $\lambda_1,\ldots,\lambda_n$ of $H$ on the diagonal, and the columns $\textbf{u}_1,\ldots,\textbf{u}_n$ of $U$ are the corresponding orthonormal eigenvectors of $H$. This easily implies~\eqref{spectralT} with
\begin{equation}\label{spectralM}
\mu(x) = \sum_{j=1}^n w_j \delta_{\lambda_j} , \quad \mbox{ where } w_j = |\langle \mathbf{e}_1, \mathbf{u}_j \rangle|^2.
\end{equation}
Here $\delta_\lambda$ is the Dirac measure at $\lambda$, i.e., the probability measure concentrated at a point $\lambda$. Note that the support of $\mu$ consists of $\le n$ points ($<n$ if some of the eigenvalues coincide or if some of the eigenvectors are orthogonal to $\mathbf{e}_1$).

Note that because of~\eqref{fixE1}, the spectral measures of $H$ and of its Jacobi form $\calJ$ coincide, that is $H$ and $\calJ$ have identical eigenvalues $\lambda_j$'s and eigenweights $w_j$'s. In particular, spectral measures of $GOE_n$ and $G \beta E_n$ with $\beta=1$ coincide; spectral measures of $GUE_n$ and $G \beta E_n$ with $\beta=2$ coincide; \textit{quaternion-valued} spectral measures of $GSE_n$ and $G \beta E_n$ with $\beta=4$ (viewed as a matrix with purely-real quaternion entries) coincide. Analogous statements can be made for $LOE_{(m,n)}$/$LUE_{(m,n)}/LSE_{(m,n)}$ and $L\beta E_{(m,n)}$. 

We remark that all the statements in lemmas and theorems below should be understood to hold with probability 1.

\begin{lemma}[Dumitriu--Edelman~\cite{DE}]\label{DE3}
For any $0<\beta<\infty$, the spectral measure of $G \beta E_n$-ensemble is~\eqref{spectralM} where $\lambda_1,\ldots,\lambda_n, w_1,\ldots, w_{n-1}$ are distributed on
\begin{equation}\label{domain}
 \sum_{j=1}^n w_j =1;  \quad w_j > 0, \quad 1 \le j \le n; \quad \lambda_j \in \bbR
\end{equation}
according to
\begin{equation}\label{evGaussian}
\tfrac{1}{g_{\beta,n}} \prod_{j=1}^n e^{-\lambda_j^2/2} \prod_{1\le j<k \le n} |\lambda_j - \lambda_k|^\beta d\lambda_1 \ldots d\lambda_n
\times \tfrac{1}{c_{\beta,n}} \prod_{j=1}^n w_j^{\beta /2 -1 }dw_1 \ldots dw_{n-1},
\end{equation}
where
\begin{equation}\label{normalizations}
g_{\beta,n} = (2\pi)^{n/2} \prod_{j=1}^{n} \frac{\Gamma(1+\beta j/2)}{\Gamma(1+\beta/2)},
\quad
c_{\beta,n} = \frac{\Gamma(\beta/2)^n}{\Gamma(\beta n/2)}.
\end{equation}
\end{lemma}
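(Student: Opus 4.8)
The plan is to follow the argument of Dumitriu--Edelman~\cite{DE}. Since $G\beta E_n$ is by definition the tridiagonal ensemble with independent entries $a_j\sim\tilde\chi_{\beta(n-j)}$ ($1\le j\le n-1$) and $b_j\sim N(0,1)$ ($1\le j\le n$), the task is to compute the law of the spectral measure of this random Jacobi matrix $\calJ$ and re-express it in the coordinates $(\lambda_1,\ldots,\lambda_n,w_1,\ldots,w_{n-1})$. The key structural fact is the classical finite inverse spectral correspondence: applying Gram--Schmidt to $1,x,x^2,\ldots$ in $L^2(d\mu)$ recovers a Jacobi matrix from its spectral measure, which gives a bijection between the set of Jacobi coefficients with $a_j>0$ for all $j$ and the set of probability measures $\mu$ with exactly $n$ atoms, i.e. the configuration space~\eqref{domain}. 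Because a $\tilde\chi$ variable is a.s. strictly positive, $\calJ$ a.s. lies in this regime --- in particular $\mathbf{e}_1$ is a.s. cyclic and the spectral measure a.s. has exactly $n$ atoms, so~\eqref{domain} is the correct support --- and the problem reduces to pushing the explicit density of $(a_1,\ldots,a_{n-1},b_1,\ldots,b_n)$ forward along this bijection (the $\lambda_j$ being understood as an unordered collection, or, equivalently, one works on the region $\lambda_1<\cdots<\lambda_n$ and symmetrizes, which accounts for a factor $n!$).

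The joint density of $(a,b)$ is proportional to $\prod_{j=1}^{n-1} a_j^{\beta(n-j)-1}e^{-a_j^2}\cdot\prod_{j=1}^n e^{-b_j^2/2}$, and I would transform it using three ingredients. First, the exponent is a spectral invariant: $\sum_{j=1}^{n-1}a_j^2+\tfrac12\sum_{j=1}^n b_j^2=\tfrac12\tr(\calJ^2)=\tfrac12\sum_{j=1}^n\lambda_j^2$, which yields the Gaussian weight $\prod_j e^{-\lambda_j^2/2}$. Second, the polynomial prefactor is handled by the Hankel determinant identity $\prod_{j=1}^{n-1}a_j^{2(n-j)}=\det\big(\int x^{p+q}\,d\mu(x)\big)_{p,q=0}^{n-1}$; writing the Hankel matrix as $VWV^{T}$ with $V=(\lambda_k^{\,p})_{p,k}$ the Vandermonde matrix of the $\lambda_k$'s and $W=\mathrm{diag}(w_1,\ldots,w_n)$, this determinant equals $\prod_{i<j}(\lambda_i-\lambda_j)^2\prod_{k=1}^n w_k$, so $\prod_j a_j^{\beta(n-j)}=\prod_{i<j}|\lambda_i-\lambda_j|^\beta\prod_k w_k^{\beta/2}$. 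Third, one needs the Jacobian of the map $(a,b)\mapsto(\lambda,w)$; this I would extract from the inverse spectral correspondence, e.g. via the resolvent identity $\langle\mathbf{e}_1,(\calJ-z)^{-1}\mathbf{e}_1\rangle=\det(\calJ^{(1)}-z)/\det(\calJ-z)=\sum_k w_k/(\lambda_k-z)$ (with $\calJ^{(1)}$ the lower $(n-1)\times(n-1)$ block), which expresses the $w_k$ in terms of the $\lambda$'s and the coefficients of $\calJ^{(1)}$ and makes the determinant computable (alternatively one factors the map through the diagonalization map for symmetric matrices restricted to tridiagonal ones). Combining the three ingredients, all the $a_j$-factors cancel and the density in the coordinates $(\lambda,w)$ is proportional to $\prod_{i<j}|\lambda_i-\lambda_j|^\beta e^{-\sum_j\lambda_j^2/2}\prod_{k=1}^n w_k^{\beta/2-1}$ on~\eqref{domain}, which factorizes as the $G\beta E_n$ eigenvalue density times a Dirichlet$(\beta/2,\ldots,\beta/2)$ law on the weights.

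It remains to pin down the two normalization constants. The weight factor integrates over the simplex to the Dirichlet integral $\int\prod_{k=1}^n w_k^{\beta/2-1}\,dw_1\cdots dw_{n-1}=\Gamma(\beta/2)^n/\Gamma(n\beta/2)=c_{\beta,n}$. For the eigenvalue factor one may either invoke Mehta's integral $\int_{\bbR^n}\prod_{i<j}|\lambda_i-\lambda_j|^\beta e^{-\sum_j\lambda_j^2/2}\,d\lambda=g_{\beta,n}$, or --- a pleasant by-product of the matrix model --- deduce it: since the $(a,b)$-density integrates to $1$, and the product of the individual $\tilde\chi_{\beta(n-j)}$ and $N(0,1)$ normalizations together with $c_{\beta,n}$ is explicit, matching constants forces the stated value of $g_{\beta,n}$ (hence reproves Mehta's formula), after an elementary $\Gamma$-function manipulation and the bookkeeping of the factor $n!$ from the ordering of the eigenvalues and the factor $2^{n-1}$ from the $w_k\mapsto\sqrt{w_k}$ substitution passing from weights to eigenvector first components.

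I expect the main obstacle to be the Jacobian computation of the second paragraph: unlike the exponent and Hankel-determinant identities, which are short, the inverse spectral Jacobian requires a genuine determinant calculation (or the two-step factorization), and it is there that the bookkeeping --- strict positivity of the $a_j$, sign choices for the eigenvectors, and the choice of independent coordinates among $w_1,\ldots,w_n$ --- must be tracked carefully so that the $a_j$-powers cancel exactly and the advertised constants $g_{\beta,n}$, $c_{\beta,n}$ emerge. All of this is carried out in~\cite{DE}.
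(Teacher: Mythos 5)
Your proposal is correct and is essentially the Dumitriu--Edelman argument that the paper itself invokes for this lemma (the paper states it as a citation of~\cite{DE} without reproducing the proof): the trace identity for the exponent, the Hankel/Vandermonde identity $\prod_j a_j^{2(n-j)}=\prod_{i<j}(\lambda_i-\lambda_j)^2\prod_k w_k$, and the inverse-spectral Jacobian $\propto \prod_j a_j/\prod_k w_k$ combine exactly as you describe, and the constants $c_{\beta,n}$ and $g_{\beta,n}$ come from the Dirichlet and Mehta integrals as you say. No gaps.
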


\begin{lemma}[Dumitriu--Edelman~\cite{DE}]\label{DE4}
For any $m \ge n$ and any $0<\beta<\infty$, the spectral measure of $L \beta E_{(m,n)}$-ensemble is~\eqref{spectralM} where $\lambda_1,\ldots,\lambda_n, w_1,\ldots, w_{n-1}$ are distributed on
\begin{equation}\label{domain2}
 \sum_{j=1}^n w_j =1;  \quad w_j > 0, \quad 1 \le j \le n; \quad \lambda_j >0
\end{equation}
according to
\begin{equation}\label{evLaguerre}
\tfrac{1}{l_{\beta,n,a}} \prod_{j=1}^n \lambda_j^{\beta a/2} e^{-\lambda_j/2} \prod_{1\le j<k \le n} |\lambda_j - \lambda_k|^\beta d\lambda_1 \ldots d\lambda_n
\times \tfrac{1}{c_{\beta,n}} \prod_{j=1}^n w_j^{\beta /2 -1 }dw_1 \ldots dw_{n-1},
\end{equation}
where $a=|m-n|+1-2/ \beta$, and
\begin{equation}\label{normalizations2}
l_{\beta,n,a} = 2^{n(a\beta/2 +1+(n-1)\beta/2)} \prod_{j=1}^{n} \frac{\Gamma(1+\beta j /2) \Gamma(1+\beta a/2+\beta(j-1)/2)}{\Gamma(1+\beta/2)},
\end{equation}
and $c_{\beta,n}$ is as in~\eqref{normalizations}.
\end{lemma}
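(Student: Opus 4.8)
The plan is to run the same argument that proves Lemma~\ref{DE3}, inserting one extra change-of-variables layer to account for the factorization $\calJ=B^*B$ from Lemma~\ref{DE2}(i). Since $m\ge n$, every index $\beta(m-j+1)$ and $\beta(n-j)$ is positive, so with probability one all entries $x_j,y_j$ of $B$ are strictly positive; hence $\calJ=B^*B$ is positive definite, $\mathbf{e}_1$ is cyclic for it (all off-diagonal Jacobi parameters will be positive), and by~\eqref{fixE1} the spectral measure of $\calJ$ is the spectral measure of the original $L\beta E_{(m,n)}$ matrix, with $n$ distinct atoms $\lambda_j>0$ and strictly positive weights $w_j$, i.e. it lives on~\eqref{domain2}. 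So the task reduces to pushing the joint density of $(x_1,\dots,x_n,y_1,\dots,y_{n-1})$ — which by Lemma~\ref{DE2}(i) is proportional to $\prod_{j=1}^n x_j^{\beta(m-j+1)-1}\prod_{j=1}^{n-1}y_j^{\beta(n-j)-1}\,e^{-\frac12(\sum_j x_j^2+\sum_j y_j^2)}$ on $(0,\infty)^{2n-1}$ — through the composite bijection $(x,y)\mapsto(b,a)\mapsto(\lambda,w)$, where $(b,a)$ are the Jacobi parameters of $\calJ$ and the second arrow is the Jacobi inverse-spectral map, and then matching the result against~\eqref{evLaguerre}.

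First I would record the two Jacobians and the structural identities I need. A direct multiplication gives $b_k=x_k^2+y_{k-1}^2$ (with $y_0:=0$) and $a_k=x_ky_k$, and these equations can be solved recursively for $(x,y)$ because all $x_k>0$, so $B\mapsto B^*B$ is a diffeomorphism; ordering the variables so that the Jacobian matrix is lower bidiagonal gives $\big|\partial(b,a)/\partial(x,y)\big|=2^n\,x_n\prod_{j=1}^{n-1}x_j^2$. For the second arrow I would just invoke the Jacobi inverse-spectral Jacobian established by Dumitriu--Edelman and already underlying Lemma~\ref{DE3}, namely $\big|\partial(b,a)/\partial(\lambda,w)\big|=\prod_{j=1}^{n-1}a_j\big/\prod_{j=1}^{n}w_j$. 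I will also use: $\sum_j x_j^2+\sum_j y_j^2=\tr\calJ=\sum_j\lambda_j$; $\prod_{j=1}^n x_j^2=\det\calJ=\prod_j\lambda_j$ (because the leading $k\times k$ block of $B^*B$ equals $B_k^*B_k$); and the discriminant identity $\prod_{j=1}^{n-1}a_j^{2(n-j)}=\prod_{i<j}(\lambda_i-\lambda_j)^2\prod_{k=1}^n w_k$, which is the Gram determinant of $\{1,x,\dots,x^{n-1}\}$ in $L^2(\mu)$.

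The one nonroutine step is the reorganization of the power monomial. Using $\beta(m-j+1)-1=[\beta(n-j)-1]+\beta(m-n+1)$ I get
\[
\prod_{j=1}^n x_j^{\beta(m-j+1)-1}\prod_{j=1}^{n-1}y_j^{\beta(n-j)-1}
=\Big(\prod_{j=1}^{n-1}a_j^{\beta(n-j)-1}\Big)\,x_n^{-1}\Big(\prod_{j=1}^n x_j^2\Big)^{\beta(m-n+1)/2}
=\Big(\prod_{j=1}^{n-1}a_j^{\beta(n-j)-1}\Big)\,x_n^{-1}\Big(\prod_j\lambda_j\Big)^{\beta(m-n+1)/2}.
\]
Multiplying by $\big|\partial(x,y)/\partial(b,a)\big|=(2^n x_n\prod_{j=1}^{n-1}x_j^2)^{-1}$, the leftover factor $x_n^{-1}\cdot x_n^{-1}\prod_{j=1}^{n-1}x_j^{-2}=(\prod_j x_j^2)^{-1}=(\prod_j\lambda_j)^{-1}$ lowers the power of $\prod_j\lambda_j$ by one; multiplying by $\big|\partial(b,a)/\partial(\lambda,w)\big|$, the factor $\prod_{j=1}^{n-1}a_j$ promotes $\prod a_j^{\beta(n-j)-1}$ to $\prod a_j^{\beta(n-j)}=\prod_{i<j}|\lambda_i-\lambda_j|^\beta\prod_{k=1}^n w_k^{\beta/2}$, and the factor $\prod_{k=1}^n w_k^{-1}$ turns this into $\prod_{k=1}^n w_k^{\beta/2-1}$. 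Collecting everything gives the pushforward density proportional to $\prod_j\lambda_j^{\beta(m-n+1)/2-1}e^{-\lambda_j/2}\prod_{i<j}|\lambda_i-\lambda_j|^\beta\prod_{k=1}^n w_k^{\beta/2-1}$ on~\eqref{domain2}, which is exactly~\eqref{evLaguerre} with $\beta a/2=\beta(m-n+1)/2-1$, i.e. $a=|m-n|+1-2/\beta$ (using $m\ge n$). The normalization then splits as before: the Dirichlet part contributes $c_{\beta,n}$ exactly as in Lemma~\ref{DE3}, while $l_{\beta,n,a}$ is whatever renders the $\lambda$-part a probability density — a Laguerre--Selberg integral, which evaluates to~\eqref{normalizations2}; as a cross-check one can instead track the product of the $\chi$-density constants together with the powers of $2$ contributed by the two Jacobians.

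The part that will need the most care is the bookkeeping: computing the Jacobian of $B\mapsto B^*B$ cleanly, and pinning down the normalization constant. The algebraic collapse above is short, but it hinges on noticing that the stray $x_n$ — and, underneath it, the leading-minor relation $\prod_{j\le k}x_j^2=\det\calJ_k$ — is precisely cancelled by that Jacobian, so that the pushforward, a priori an intractable function of $x(\lambda,w)$ and $y(\lambda,w)$, becomes the clean expression above. I would also remark that the description of the attainable configurations is trouble-free here only because $m\ge n$; for $m<n$ the matrix $B$ is degenerate and the image of the parametrization is not the naive analogue of~\eqref{domain2}, which is why that regime must be stated and treated separately.
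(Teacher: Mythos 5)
Your argument is correct and is essentially the Dumitriu--Edelman proof that the paper simply cites for this lemma (and which the paper itself adapts, via the same $(x,y)\to(b,a)\to(\lambda,w)$ chain of changes of variables, in its Proposition~\ref{Pr1} for the $m<n$ case). The only caveat is that the inverse-spectral Jacobian $\bigl|\partial(b,a)/\partial(\lambda,w)\bigr|$ equals $\prod_j a_j/\prod_j w_j$ only up to a power of $2$ (Dumitriu--Edelman state it in the variables $q_j=\sqrt{w_j}$), which is harmless for identifying the density but would need care in your proposed cross-check of the constant $l_{\beta,n,a}$ by tracking constants through the Jacobians.
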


\begin{proposition}\label{Pr1}
For any $m \le n-1$ and any $0<\beta<\infty$, the spectral measure of $L \beta E_{(m,n)}$-ensemble is
\begin{equation}\label{spectralM2}
\mu(x) = w_0 \delta_0 + \sum_{j=1}^m w_j \delta_{\lambda_j} , 
\end{equation}
where $\lambda_1,\ldots,\lambda_m, w_1,\ldots, w_{m}$ are distributed on
\begin{equation}\label{domain3}
\sum_{j=0}^m w_j =1;  \quad w_j > 0, \quad 0 \le j \le m; \quad \lambda_j >0
\end{equation}
according to
\begin{multline}\label{evLaguerre2}
\tfrac{1}{l_{\beta,m,a}} \prod_{j=1}^m \lambda_j^{\beta a/2} e^{-\lambda_j/2} \prod_{1\le j<k \le m} |\lambda_j - \lambda_k|^\beta d\lambda_1 \ldots d\lambda_m
\\
\times \tfrac{1}{d_{\beta,m,n}} w_0^{\beta(n-m) /2 -1} \prod_{j=1}^m w_j^{\beta /2 -1 }dw_1 \ldots dw_{m},
\end{multline}
where $a=|n-m|+1-2/ \beta$; $l_{\beta,m,a}$ is as in~\eqref{normalizations2}; and
\begin{equation}\label{normalizations3}
d_{\beta,m,n} = \frac{\Gamma(\beta(n-m)/2) \Gamma(\beta/2)^m}{\Gamma(\beta n/2)}.
\end{equation}
\end{proposition}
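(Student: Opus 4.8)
The plan is to reduce the statement, via the Dumitriu--Edelman tridiagonalization, to the spectral analysis of one explicit random $(m+1)\times(m+1)$ Jacobi matrix, and then to obtain the law of its spectral measure by pushing forward already-known $\beta$-ensemble laws through a rank-one-perturbation map. First I would apply Lemma~\ref{DE2}(ii): there is a unitary $S$ fixing $\mathbf e_{1}$ with $SHS^{*}=\calJ=B^{*}B$, and because $x_{m+1}=\dots=y_{m+1}=\dots=0$, the matrix $\calJ$ is the orthogonal direct sum of an $(m+1)\times(m+1)$ Jacobi matrix $\calJ_{1}=B_{1}^{*}B_{1}$ and a zero block, where $B_{1}$ is upper bidiagonal with diagonal $(x_{1},\dots,x_{m},0)$ and super-diagonal $(y_{1},\dots,y_{m})$, $x_{j}\sim\chi_{\beta(m-j+1)}$ and $y_{j}\sim\chi_{\beta(n-j)}$ independent. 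Since $S\mathbf e_{1}=\mathbf e_{1}$, the spectral measure of $H$ equals the spectral measure $\mu$ of $\calJ_{1}$. As $\rank B_{1}=m$ a.s., $\calJ_{1}$ is positive semidefinite with a simple zero eigenvalue, so $\mu$ has exactly $m+1$ atoms, one of them at $0$, all of positive weight --- which already gives the form~\eqref{spectralM2} and the domain~\eqref{domain3}; the reverse inclusion (every configuration in~\eqref{domain3} is attained, with a positive density) follows from the bidiagonal Cholesky factorization of a positive semidefinite Jacobi matrix whose least eigenvalue $0$ is simple.

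Next I would exploit the factored form. Writing $\widetilde B$ for the first $m$ rows of $B_{1}$ (an $m\times(m+1)$ matrix) and using $(B_{1}^{*}B_{1})^{k}=B_{1}^{*}(B_{1}B_{1}^{*})^{k-1}B_{1}$ together with $B_{1}\mathbf e_{1}=x_{1}\mathbf e_{1}$, one obtains $\int y^{k}\,d\mu(y)=x_{1}^{2}\int y^{k-1}\,d\mu'(y)$ for $k\ge1$, where $\mu'=\sum_{k=1}^{m}v_{k}\delta_{\lambda_{k}}$ is the spectral measure of the $m\times m$ matrix $\calJ_{1}':=\widetilde B\widetilde B^{*}$. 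This yields the explicit relations $w_{k}=x_{1}^{2}v_{k}/\lambda_{k}$ for $1\le k\le m$ and $w_{0}=1-\sum_{k\ge1}w_{k}$. Moreover $\widetilde B=[\,x_{1}\mathbf e_{1}\mid\widetilde B'\,]$, so $\calJ_{1}'=A+x_{1}^{2}\mathbf e_{1}\mathbf e_{1}^{*}$ with $A:=\widetilde B'(\widetilde B')^{*}$ independent of $x_{1}\sim\chi_{\beta m}$; and $A=C^{*}C$ for an upper bidiagonal $C$ whose $\chi$-parameters coincide exactly with those of the Dumitriu--Edelman model of $L\beta E_{(n-1,m)}$ --- this is where $m\le n-1$ enters, guaranteeing $n-1\ge m$. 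Hence by Lemma~\ref{DE4} the spectral data $(\mu_{1},\dots,\mu_{m};u_{1},\dots,u_{m})$ of $A$ obey the Laguerre law with parameter $a_{0}=(n-m)-2/\beta$, independently of $t:=x_{1}^{2}\sim\chi^{2}_{\beta m}$.

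The heart of the argument is then the change of variables $\Psi\colon(\mu_{1\dots m},u_{1\dots m-1},t)\mapsto(\lambda_{1\dots m},w_{1\dots m})$, in which the $\lambda_{k}$ are the roots of the secular equation $\sum_{j}u_{j}/(z-\mu_{j})=1/t$ and the $w_{k}$ are as above; note the interlacing $\mu_{1}<\lambda_{1}<\mu_{2}<\dots<\mu_{m}<\lambda_{m}$, which, together with $A\succ0$, forces $\lambda_{k}>0$ and, run backwards, gives the surjectivity needed for the domain. Two facts make the push-forward collapse onto~\eqref{evLaguerre2}. The first is the minor identity $\det\calJ_{1}^{(1)}=\prod_{j}y_{j}^{2}$ (deleting the first row and column of $\calJ_{1}$), equivalently $\prod_{j}\mu_{j}=w_{0}\prod_{k}\lambda_{k}$; combined with the residue formula $w_{k}=\prod_{j}(\lambda_{k}-\mu_{j})\big/\big(\lambda_{k}\prod_{l\ne k}(\lambda_{k}-\lambda_{l})\big)$ and the dual one for $u_{j}$, it gives the polynomial identity
\[
\prod_{i<j}(\mu_{i}-\mu_{j})^{2}\;\prod_{j}u_{j}\;t^{m}\;=\;\prod_{i<j}(\lambda_{i}-\lambda_{j})^{2}\;\prod_{k}w_{k}\;\prod_{k}\lambda_{k}.
\]
The second is the Jacobian: a direct chain-rule computation --- equivalently, the classical Dixon--Anderson-type evaluation of the secular-equation map --- gives $|\det D\Psi|=C_{m}\prod_{k}w_{k}\big/\big(\prod_{j}u_{j}\cdot t\big)$ for an explicit constant $C_{m}$. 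Feeding the Lemma~\ref{DE4} law times the $\chi^{2}_{\beta m}$-density, divided by $|\det D\Psi|$, the factor $\prod_{j}\mu_{j}^{\beta a_{0}/2}$ becomes $w_{0}^{\beta(n-m)/2-1}\prod_{k}\lambda_{k}^{\beta(n-m)/2-1}$, the factor $\prod_{i<j}(\mu_{i}-\mu_{j})^{\beta}\prod_{j}u_{j}^{\beta/2}\,t^{\beta m/2}$ becomes $\prod_{i<j}(\lambda_{i}-\lambda_{j})^{\beta}\prod_{k}w_{k}^{\beta/2}\prod_{k}\lambda_{k}^{\beta/2}$, and $e^{-\frac12\sum_{j}\mu_{j}}e^{-t/2}=e^{-\frac12\sum_{k}\lambda_{k}}$, reproducing~\eqref{evLaguerre2} with $a=(n-m)+1-2/\beta$; the constants $l_{\beta,m,a}$ and $d_{\beta,m,n}$ then drop out of the normalization bookkeeping (or from requiring total mass $1$).

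The step I expect to be the main obstacle is the evaluation of $|\det D\Psi|$ together with the careful verification that $\Psi$ is a bijection onto~\eqref{domain3}: the map $\Psi$ is only implicitly defined through the secular equation, so one must either set up the Dixon--Anderson determinant with care or argue through the interlacing to pin down the image and the positivity of all the recovered $u_{j}$ and $\mu_{j}$. (An alternative route avoids the deflation altogether and instead imitates the Dumitriu--Edelman Laguerre computation directly for the defective bidiagonal $B_{1}$, changing variables $(x_{1\dots m},y_{1\dots m})\to(\lambda_{1\dots m},w_{1\dots m})$ through the Jacobi coefficients of $\calJ_{1}$; there the extra coordinate $y_{m}\sim\chi_{\beta(n-m)}$ is precisely what produces the anomalous weight factor $w_{0}^{\beta(n-m)/2-1}$, but the change-of-variables bookkeeping under the constraint $0\in\spec\calJ_{1}$ is of comparable difficulty.)
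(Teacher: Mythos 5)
Your route is genuinely different from the paper's and, as far as I can check, it is sound. The paper proceeds by first establishing the $\beta=1$ case from scratch (the nonzero eigenvalues via the $LOE_{(m,n)}\leftrightarrow LOE_{(n,m)}$ symmetry, the weights via the first row of a Haar orthogonal eigenvector matrix, yielding the Dirichlet-type factor $w_0^{(n-m-2)/2}\prod_j w_j^{-1/2}$), then uses that known answer to back out the Jacobian of the bidiagonal change of variables $(x_j,y_j)\mapsto(\lambda_j,w_j)$ in the Dumitriu--Edelman manner, and finally cleans up with the two product identities $\prod_j (x_jy_j)^{m-j+1}=\prod_j w_j^{1/2}\prod_{j<k}|\lambda_j-\lambda_k|\prod_j\lambda_j$ and $\prod_j y_j^2=w_0\prod_j\lambda_j$ (the latter proved via orthogonal polynomials). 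You instead deflate: since $B_1\mathbf e_1=x_1\mathbf e_1$ and the last row of $B_1$ vanishes, the moments of $\mu$ reduce to those of $\widetilde B\widetilde B^*=A+t\,\mathbf e_1\mathbf e_1^*$ with $A\sim L\beta E_{(n-1,m)}$ and $t=x_1^2\sim\chi^2_{\beta m}$ independent, and you push the Lemma~\ref{DE4} law through the secular-equation map. This is attractive because it works uniformly in $\beta$ without any detour through $\beta=1$, and it gives a transparent meaning to the paper's identity (ii), namely $\prod_j\mu_j=\det A=\prod_j y_j^2=w_0\prod_k\lambda_k$. I checked that your residue formulas for $u_j$ and $w_k$ are correct, that your polynomial identity $\prod_{i<j}(\mu_i-\mu_j)^2\prod_j u_j\,t^m=\prod_{i<j}(\lambda_i-\lambda_j)^2\prod_k w_k\prod_k\lambda_k$ follows from them (both sides equal $\pm\prod_{j,k}(\lambda_k-\mu_j)$), that the trace identity $\sum_j\mu_j+t=\sum_k\lambda_k$ handles the exponentials, and that the exponent bookkeeping then reproduces~\eqref{evLaguerre2} exactly (I also verified the whole scheme, including the Jacobian, explicitly for $m=1$, where $\lvert\det D\Psi\rvert=1/\lambda_1=w_1/t$).

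The one real gap is the general-$m$ Jacobian $\lvert\det D\Psi\rvert=C_m\prod_k w_k/(\prod_j u_j\cdot t)$, which you correctly identify as the crux but do not prove. It is the right formula (its $\beta$-independence is forced by comparing the two densities, and it is of Dixon--Anderson type), but as written your argument would be circular if you tried to deduce it from the target density; you must establish it independently. This can be done by exactly the technique the paper uses in its Lemma~\ref{lmJacobian} for the non-Hermitian case: pass through the coefficients of $\det(z-\calJ_1')$ via $\prod_k(z-\lambda_k)=\prod_j(z-\mu_j)\bigl(1-t\sum_j u_j/(z-\mu_j)\bigr)$, compute the two triangular-block Vandermonde/interpolation determinants, and then account for the extra coordinate coming from $w_k=t v_k/\lambda_k$ versus the normalized weights $v_k$ of $\calJ_1'$; alternatively one can cite the known secular-equation Jacobian (Forrester's treatment of the Dixon--Anderson density). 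Your surjectivity/interlacing discussion and the Cholesky argument for the domain match what the paper does, so once the Jacobian is written out the proof is complete.
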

\begin{proof}
Let us first deal with $\beta =1$ case, which by the discussion before Lemma~\ref{DE1} reduces to computing the spectral measures of a matrix $H$ from $LOE_{(m,n)}$. For this ensemble, the eigenvalue distribution is as stated, since the nonzero eigenvalues of $LOE_{(m,n)}$ are distributed identically to the eigenvalues of $LOE_{(n,m)}$.

With probability one, we may assume that eigenvalues of $H$ satisfy $\lambda_1>\ldots>\lambda_m>0=0=\ldots=0$ ($n-m$ zeros).
Let us choose an orthonormal system of (real) eigenvectors $\mathbf{u}_1,\ldots,\mathbf{u}_n$ of $H$ corresponding to these eigenvalues, respectively. We pick each $\mathbf{u}_j$ at random uniformly from the set of all possible choices. 
Since for any $n\times n$ orthogonal matrix $O$, the matrix $O^T H O$ also belongs to  $LOE_{(m,n)}$, we can see that: $\mathbf{u}_1$ is uniformly distributed on the unit sphere $\{\mathbf{u}\in \bbR^n: ||\mathbf{u}|| = 1 \}$; and for any $1\le j \le n$, the vector $\mathbf{u}_j$ conditionally on $\mathbf{u}_1, \ldots, \mathbf{u}_{j-1}$ is uniformly distributed on the subset of this unit sphere that is orthogonal to $\mathbf{u}_1, \ldots, \mathbf{u}_{j-1}$. Therefore the matrix consisting of the eigenvectors as its columns is a Haar distributed orthogonal matrix (see, e.g.,~\cite[Prop~2.2(a)]{KK_truncations}). Then its first row $(x_1,\ldots,x_n)$ is distributed uniformly on the unit sphere $\{\mathbf{u}\in \bbR^n: ||\mathbf{u}|| = 1 \}$. Now recalling~\eqref{spectralM}, we obtain that $w_j = x_j^2$, $1\le j \le m$, and $w_0 = x_{m+1}^2+\ldots+ x_n^2$.
Now one can apply arguments from the proof of~\cite[Cor A.2]{KN} (note that $dw_j = 2w_j^{1/2} dx_j$) to see that the joint distribution of $w_1,\ldots,w_m$ is proportional to $w_0^{(n-m-2)/2} \prod_{j=1}^m w_j^{-1/2} dw_1 \ldots dw_m$. Let us ignore the normalization constant for now and come back to it in the end.

Just as in Dumitriu--Edelman~\cite{DE}, this allows us to compute the Jacobian of the change of variables from $\{x_j,y_j\}_{j=1}^m$ in~\eqref{bidiag} to $\{\lambda_j,w_j\}_{j=1}^m$. Before proceeding, we need to clarify why this change of variables is bijective. By Favard's theorem (see, e.g.,~\cite[Thms~1.3.2--1.3.3]{Rice}), there is one-to-one correspondence between all $(m+1) \times (m+1)$ Jacobi matrices~\eqref{jacobi} with all $a_j>0$ ($1\le j\le m$) and all probability measures supported on $m+1$ distinct points. This trivially implies that there is one-to-one correspondence between all positive semi-definite $(m+1) \times (m+1)$ Jacobi matrices $\calJ$ with $\det\calJ=0$ and $a_j>0$ (for each $1\le j\le m$) and all probability measures supported on $m+1$ points of the form~\eqref{spectralM2}--\eqref{domain3}. By semi-definiteness, any such $\calJ$ can be Cholesky factorized $\calJ = B^* B$ with $B$ upper-triangular with non-negative entries on the diagonal. Since $\calJ$ is tridiagonal, it is not hard to see that this $(m+1)\times (m+1)$ matrix $B$ must be two-diagonal as in~\eqref{bidiag} with $x_j\ge 0$, $1\le j \le m+1$. Since $\det\calJ=0$, we must have that $x_j=0$ for at least one $1\le j\le m+1$. But since all $a_j>0$, we must necessarily have $x_{m+1}=0$, and $x_j\ne 0$ for $1\le j\le m$. $a_j>0$ also implies that $y_j>0$, $1\le j\le m$. Conversely, any $(m+1)\times(m+1)$ matrix $B$ of the form~\eqref{bidiag} with $x_j>0, y_j>0$ for $1\le j\le m$ and $x_{m+1}=0$ clearly leads to a positive semi-definite $(m+1) \times (m+1)$ Jacobi matrix $\calJ$ with $\det\calJ=0$ and all $a_j>0$ ($1\le j\le m$).

Using the matrix model in Lemma~\ref{DE2} (case $m<n$) and the distribution~\eqref{evLaguerre2} that we proved for $\beta =1$, we obtain that the Jacobian is proportional (let us ignore the normalizing constants for now) to
\begin{multline*}
\det \frac{\partial(x_1,\ldots,x_m,y_1,\ldots,y_m)}{\partial(\lambda_1,\ldots,\lambda_m,w_1,\ldots,w_m)}
\propto
\prod_{j=1}^m x_j^{-m+j} e^{x_j^2/2} \prod_{j=1}^m y_j^{-n+j+1} e^{y_j^2/2}
\\
\times
w_0^{\tfrac{n-m}{2} -1} \prod_{j=1}^m w_j^{-\tfrac12 }
 \prod_{j=1}^m \lambda_j^{\tfrac{n-m-1}{2}} e^{-\tfrac{\lambda_j}{2}} \prod_{1\le j<k\le m} |\lambda_j - \lambda_k|
\end{multline*}
Now, recall Lemma~\ref{DE2}. The joint distribution of $\{x_1,\ldots,x_m,y_1,\ldots,y_m\}$ for $L \beta E_{(m,n)}$, $m\le n-1$, is, up to a normalizing constant,
$$
\propto \prod_{j=1}^m x_j^{\beta(m-j-1)-1} e^{-x_j^2/2} dx_j \, \prod_{j=1}^m y_j^{\beta(n-j)-1} e^{-y_j^2/2} dy_j.
$$
Using the above Jacobian, we obtain that this distribution becomes
\begin{multline}\label{distrInter}
\propto
\prod_{j=1}^m x_j^{(\beta-1)(m-j-1)} \, \prod_{j=1}^m y_j^{(\beta-1)(n-j)}
\\ \times
w_0^{\tfrac{n-m}{2} -1} \prod_{j=1}^m w_j^{-\tfrac12 }
 \prod_{j=1}^m \lambda_j^{\tfrac{n-m-1}{2}} e^{-\tfrac{\lambda_j}{2}} \prod_{1\le j<k\le m } |\lambda_j - \lambda_k|
\, d\lambda_1\ldots d\lambda_m dw_1\ldots dw_m.
\end{multline}

\begin{lemma}
\begin{itemize}
\item[(i)] The following identity holds
$$
\prod_{j=1}^m x_j^{m-j+1} y_j^{m-j+1} = \prod_{j=0}^m w^{1/2}_j \prod_{1\le j<k\le m} |\lambda_j-\lambda_k| \prod_{j=1}^m \lambda_j  .$$
\item[(ii)] The following identity holds
$$
\prod_{j=1}^m y_j^2 = w_0 \prod_{j=1}^m \lambda_j .
$$
\end{itemize}
\end{lemma}
\begin{proof}
(i) follows immediately by noting that $x_j y_j = a_j$, $1\le j \le m$, and then applying~\cite[Lemma 2.7]{DE}. Note the clash of notations: their $n$ is our $m+1$, their $\{b_1,\ldots,b_{n-1}\}$, $\{\lambda_1,\ldots,\lambda_n\}$, and $\{q_1^2,\ldots,q_n^2\}$ are ours $\{a_m,\ldots,a_1\}$, $\{\lambda_1,\ldots,\lambda_m,0\}$, and $\{w_1,\ldots,w_m,w_0\}$, respectively.

To prove (ii), we use theory of orthogonal polynomials, see, e.g.,~\cite{Rice}. By combining~\cite[Prop 3.2.8]{Rice} and~\cite[Prop 2.3.12]{Rice} we get
$$
w_0 = -\lim_{z\to 0}  \langle \mathbf{e}_1, z (\calJ-z)^{-1} \mathbf{e}_1 \rangle  = \lim_{z\to 0} \tfrac{z q_{m+1}(z)}{p_{m+1}(z)} =  \tfrac{q_{m+1}(0)}{p'_{m+1}(0)},
$$
where $p_j$'s and $q_j$'s are the orthonormal polynomials associated to $\calJ$ of the first and second kind, respectively (in order to define $p_{m+1}$ and $q_{m+1}$ we need $a_{m+1}$ which we take to be an arbitrary positive number).  By~\cite[Thm 1.2.4]{Rice}, $p_{m+1}(z) = (\prod_{j=1}^{m+1} a_j^{-1}) \det(z-\calJ)$, so $p'_{m+1}(0) = (-1)^m \prod_{j=1}^{m+1} a_j^{-1} \prod_{j=1}^m \lambda_j$. Using  the Wronskian relation~\cite[Prop 3.2.3]{Rice} and $p_{m+1}(0) = 0$ (since $0$ is an eigenvalue of $\calJ$), we obtain $q_{m+1}(0)= 1/(a_{m+1} p_{m}(0))$. Finally, $p_m(z) = (\prod_{j=1}^{m} a_j^{-1}) \det(z-\calJ_{m\times m})$, where $\calJ_{m\times m}$ is the $m\times m$ top left corner of $\calJ$. Recall that $\calJ = B^* B$. It is easy to see that $\calJ_{m\times m} = B_{m\times m} ^* B_{m\times m}$, where $B_{m\times m}$ is the $m\times m$ top left corner of $B$. Therefore $p_m(0)=(\prod_{j=1}^{m} a_j^{-1}) \det(-B_{m\times m} ^* B_{m\times m}) = (-1)^m (\prod_{j=1}^{m} a_j^{-1}) \prod_{j=1}^m x_j^2$. Combining this all together with $a_j = x_j y_j$, $1\le j \le m$, we obtain (ii).
\end{proof}
With the aid of this lemma we can now simplify the distribution~\eqref{distrInter}. Indeed, using the identity (i) we can eliminate the product of involving $x_j$'s, and then using (ii), we can eliminate the product involving $y_j$'s. It is an easy exercise to see that, up to a normalization constant, this reduces~\eqref{distrInter} to~\eqref{evLaguerre2}. Finally, note that $l_{\beta,m,a}$ is the right normalization constant for the eigenvalues in~\eqref{evLaguerre2} by Lemma~\ref{DE4}. And the normalization constant $d_{\beta,m,n}$ can be computed by evaluating the Dirichlet integral, see, e.g.,~\cite[Cor A.4]{KN}.
\end{proof}

\section{Rank one perturbations}\label{sRankOne}
Let $H$ be an $n \times n$ matrix from one of the six ensembles $GOE_n$, $LOE_{m\times n}$ (let us refer to these two ensembles as the $\beta=1$ case throughout this section); $GUE_n$,  $LUE_{m\times n}$ ($\beta=2$ case); $GSE_n$, $LSE_{m\times n}$ ($\beta=4$ case). Let
\begin{equation}\label{effective}
H_{eff}=H + i \Gamma,
\end{equation}
where $\Gamma = (\Gamma_{jk})_{j,k=1}^n$ is an $n\times n$ positive matrix that is independent of $H$ with real 
(if $\beta=1$), complex 
(if $\beta=2$), or quaternion  
(if $\beta=4$) entries. We assume that $\Gamma$ has rank $1$ (for the case $\beta=4$, the (right) rank is viewed over quaternions, see, e.g.,~\cite{bRodman}). 


Since $\Gamma$ is Hermitian, we can diagonalize $\Gamma = U^* (l I_{1\times1}) U$, where $I_{1\times1}$ is the $n\times n$ matrix with $(1,1)$-entry equal to $1$ and $0$ everywhere else, and $U$ is orthogonal, unitary, or unitary symplectic for $\beta=1,2,4$, respectively (for quaternion diagonalization, see, e.g.,~\cite[Thm 5.3.6]{bRodman}). Since the Hilbert--Schmidt norm should be preserved, we see that $l=||\Gamma||_{HS} = (\sum_{j,k=1}^n |\Gamma_{jk}|^2)^{1/2}$.

Then
$
H_{eff}=U^*( UHU^* + i l I_{1\times1} ) U,
$
where $U$ is independent of $H$.
From Definitions~\ref{gue} and~\ref{lue}, it is clear that $UHU^*$ belongs to the same ensemble as $H$. 
Therefore we can apply the tridiagonalization procedure from Subsection~\ref{ssTridiagonalization} to reduce $UHU^*$ to the Dumitriu--Edelman form: $UHU^* = S^* \calJ S$ with $\calJ$ as in Lemmas~\ref{DE1} or~\ref{DE2}, and $S$ unitary  with $S \mathbf{e}_1 = S^* \mathbf{e}_1 = \mathbf{e}_1$. This implies $S I_{1\times1} S^*=I_{1\times1} $ and therefore
$$
H_{eff}=U^*S^* ( \calJ + i l I_{1\times1} ) S U.
$$
This shows that $H_{eff}$ can be unitarily reduced to a tridiagonal form whose all entries are real, except for the complex $(1,1)$ entry. We can formalize it into a theorem.

\begin{theorem}[Matrix model for rank one non-Hermitian perturbations of Gaussian and Wishart ensembles] \label{thmModels}
Let $H$ be taken from one of the six ensembles $GOE_n$,  $GUE_n$, $GSE_n$, $LOE_{m\times n}$ ,  $LUE_{m\times n}$ ,  $LSE_{m\times n}$. Suppose $\Gamma \ge \mathbf{0}$, $\rank \Gamma = 1$, and $\Gamma$ is independent of $H$. Then $H_{eff} = H+ i \Gamma$ is unitarily equivalent to
\begin{equation}\label{modelPert}
\calJ + i l I_{1\times1}
\end{equation}
where $\calJ$ is as in Lemma~\ref{DE1} or~\ref{DE2}, respectively, and $l=||\Gamma||_{HS} = (\sum_{j,k=1}^n |\Gamma_{jk}|^2)^{1/2}$ is independent of $\calJ$.
\end{theorem}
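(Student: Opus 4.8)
The plan is to make precise the reduction sketched just before the statement. Since $\Gamma$ is Hermitian and positive of rank one (over $\bbR$, $\bbC$, or the quaternions according as $\beta = 1,2,4$), it has a single nonzero eigenvalue $l>0$, and can be written $\Gamma = U^*(l\,I_{1\times1})U$ with $U$ orthogonal, unitary, or unitary symplectic; equating Hilbert--Schmidt norms on the two sides forces $l = \|\Gamma\|_{HS}$. The first ingredient I would record is that conjugation by a \emph{fixed} such $U$ preserves the law of each of the six ensembles: the density of the underlying i.i.d.\ Gaussian matrix $Y$ from Definitions~\ref{gue} and~\ref{lue} depends only on its Hilbert--Schmidt norm, which is unchanged under $Y\mapsto UYU^*$ in the Gaussian case and under $Y\mapsto YU^*$ in the Wishart case, and these maps are linear isometries of the ambient Euclidean space, hence measure-preserving; consequently $UHU^*$ has the same law as $H$.

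Next I would write $H_{eff} = U^*\bigl(UHU^* + i\,l\,I_{1\times1}\bigr)U$ and apply the tridiagonalization of Subsection~\ref{ssTridiagonalization} to $H' := UHU^*$, obtaining a unitary $S$ with $S\mathbf{e}_1 = S^*\mathbf{e}_1 = \mathbf{e}_1$, as in~\eqref{fixE1}, and $SH'S^* = \calJ$ in the Dumitriu--Edelman form of Lemma~\ref{DE1} or~\ref{DE2}. Since $I_{1\times1} = \mathbf{e}_1\mathbf{e}_1^*$ and $S\mathbf{e}_1 = \mathbf{e}_1$, one has $S I_{1\times1} S^* = (S\mathbf{e}_1)(S\mathbf{e}_1)^* = I_{1\times1}$, whence
\[
H_{eff} = U^*S^*\bigl(\calJ + i\,l\,I_{1\times1}\bigr)SU ,
\]
which exhibits $H_{eff}$ as unitarily equivalent to $\calJ + i\,l\,I_{1\times1}$. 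Note that this part is purely algebraic.

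What remains is the distributional claim: that $\calJ$ carries exactly the Dumitriu--Edelman law (unaffected by the presence of $\Gamma$) and is independent of $l$. Here I would condition on $\Gamma$. Given $\Gamma$, the matrix $U$ is determined, and since $U \perp H$ the conditional law of $H' = UHU^*$ equals the unconditional law of $H$ by the invariance above; moreover $\calJ$ is, with probability one, a deterministic function of $H'$ produced by the canonical tridiagonalization (using cyclicity of $\mathbf{e}_1$ in the Gaussian and $m\ge n$ Wishart cases, and the direct-sum convention of Subsection~\ref{ssTridiagonalization}, equivalently Lemma~\ref{DE2}(ii), when $m<n$). Hence the conditional law of $\calJ$ given $\Gamma$ is the Dumitriu--Edelman law regardless of $\Gamma$; therefore $\calJ$ is independent of $\Gamma$, in particular of $l = \|\Gamma\|_{HS}$, and has the stated distribution. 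The one point requiring care is precisely this conditioning step --- ensuring that $U$ encodes no information about $H$ and that the tridiagonalization output is canonical given its input --- after which the rest is bookkeeping.
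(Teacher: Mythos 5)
Your proposal is correct and follows essentially the same route as the paper: diagonalize $\Gamma = U^*(l\,I_{1\times1})U$, use invariance of the ensemble under conjugation by the independent $U$, tridiagonalize $UHU^*$ via an $S$ fixing $\mathbf{e}_1$ so that $S I_{1\times1}S^* = I_{1\times1}$, and conclude. The extra detail you supply on why $UHU^*$ has the law of $H$ and on the conditioning step that yields independence of $\calJ$ from $l$ is exactly what the paper leaves implicit, and it is sound.
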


\begin{remark}
1. Just like Dumitriu--Edelman models, this tridiagonal matrix ensemble~\eqref{modelPert} makes sense for any $0<\beta<\infty$, not merely $\beta =1,2,4$. For the obvious reasons we will refer to these as non-Hermitian rank one perturbations of $G\beta E_n$ and $L\beta E_{(m,n)}$ ensembles.
\end{remark}

\section{Joint eigenvalue distribution}

For the rest of the paper let
\begin{equation}\label{halfPlane}
\bbC_+ := \{z\in\bbC : \im z>0\}.
\end{equation}

\subsection{Perturbations of Gaussian $\beta$-ensembles}

\begin{theorem}
For any $0<\beta<\infty$, let $\calJ$ be from $G\beta E_n$ ensemble $($see Lemma~\ref{DE1}$)$ and $l$ be independent of $\calJ$ distributed according to an absolutely-continuous probability distribution $F(l)dl$ on $(0,+\infty)$. Then the eigenvalues of~\eqref{modelPert} are distributed on $(\bbC_+)^n$ according to
\begin{multline}\label{zFinal}
\tfrac{1}{h_{\beta,n}} \, e^{-\frac12 \sum_{j=1}^n (\re z_j)^2 - \sum_{j<k} (\im z_j)(\im z_k)}
\\
\times \prod_{j,k=1}^n |z_j-\bar{z}_k |^{\tfrac\beta2 -1} \prod_{j<k} |z_j-z_k|^2  \frac{F(l)}{l^{\tfrac{\beta n}{2}-1}} d^2 z_1\ldots d^2 z_n,
\end{multline}
where $l=\sum_{j=1}^n \im z_j$, $d^2 z$ stands for the 2-dimensional complex Lebesgue measure, and
$$
h_{\beta,n} = 2^{n(\beta/2-1)}  g_{\beta,n} c_{\beta,n},
$$
where $g_{\beta,n}$ and $c_{\beta,n}$ are as in~\eqref{normalizations}.
\end{theorem}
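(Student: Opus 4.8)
\medskip

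\noindent\emph{Proof strategy.} The plan is to realize the eigenvalues of~\eqref{modelPert} as an (almost everywhere) real-analytic bijective function of the $G\beta E_n$ spectral data $(\lambda_1,\dots,\lambda_n,w_1,\dots,w_{n-1})$ of $\calJ$ together with the independent variable $l$, and then to push forward the product of the density from Lemma~\ref{DE3} and $F(l)\,dl$. Expanding $\det(zI-\calJ-ilI_{1\times1})$ along the first row and column gives the basic identity
\[
\prod_{k=1}^n(z-z_k)=\prod_{k=1}^n(z-\lambda_k)-il\prod_{k=1}^{n-1}(z-\mu_k),
\]
where $z_1,\dots,z_n$ are the eigenvalues of~\eqref{modelPert}, $\lambda_k$ those of $\calJ$, and $\mu_k$ those of the once-stripped Jacobi matrix $\calJ^{(1)}$ (again of Dumitriu--Edelman type and, importantly, independent of $l$). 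Since $\im(\calJ+ilI_{1\times1})=lI_{1\times1}\ge\bdnot$ and the matrix is a.s. simple, $z_j\in\bbC_+$. Writing $Q(z)=\prod_k(z-z_k)$ and $\ol Q(z)=\prod_k(z-\bar z_k)$ (the polynomial with conjugated coefficients) and separating real and imaginary parts — the two polynomials on the right have real coefficients — yields $\prod_k(z-\lambda_k)=\tfrac12\bigl(Q(z)+\ol Q(z)\bigr)$ and $l\prod_k(z-\mu_k)=\tfrac{i}{2}\bigl(Q(z)-\ol Q(z)\bigr)$; in particular $l=\sum_{j=1}^n\im z_j$, the $\lambda_k$ are the zeros of $Q+\ol Q$, the $\mu_k$ the zeros of $Q-\ol Q$, and $w_j=\res_{z=\lambda_j}\tfrac{\prod_k(z-\mu_k)}{\prod_k(z-\lambda_k)}=\tfrac{\prod_k(\lambda_j-\mu_k)}{\prod_{k\ne j}(\lambda_j-\lambda_k)}$.

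Next I would invoke the Arlinski{\u\i}--Tsekanovski{\u\i} analysis of Jacobi matrices with rank one imaginary part to pin down the domain: the map $(\lambda,w,l)\mapsto\{z_1,\dots,z_n\}$ is a bijection between $\{\lambda_j\in\bbR\text{ distinct},\,w_j>0,\,\sum_j w_j=1,\,l>0\}$ (modulo relabelling) and configurations of $n$ points in $\bbC_+$. Concretely, given distinct $z_1,\dots,z_n\in\bbC_+$ one sets $l=\sum_j\im z_j>0$ and $B(z)=\prod_k\frac{z-\bar z_k}{z-z_k}$, checks $|B|>1$ on $\bbC_+$ and $B(\infty)=1$, and then $m(z)=\tfrac{i}{l}\,\tfrac{1-B(z)}{1+B(z)}=\tfrac{\prod_k(z-\mu_k)}{\prod_k(z-\lambda_k)}$ is analytic on $\bbC_+$ with values in $\bbC_-$, symmetric under conjugation, and $\sim 1/z$ at $\infty$, hence of the form $\sum_j w_j/(z-\lambda_j)$ with real distinct $\lambda_j$, positive $w_j$, and $\sum_j w_j=1$. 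Settling this ``space of attainable configurations'' is where I expect the one genuinely nontrivial point to lie (it is simple here — all of $(\bbC_+)^n$ — but must be proved; for the Laguerre case it is where care is required). Once this is in place the change of variables $(\lambda,w,l)\leftrightarrow(z_1,\dots,z_n)$ is legitimate on full-measure open sets.

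For the Jacobian I would route through the intermediate variables $(\lambda_1,\dots,\lambda_n,\mu_1,\dots,\mu_{n-1},l)$. The passage $(w_1,\dots,w_{n-1})\leftrightarrow(\mu_1,\dots,\mu_{n-1})$ at fixed $\lambda,l$ is the classical spectral-data Jacobian: since $\partial w_j/\partial\mu_k=w_j/(\mu_k-\lambda_j)$, a Cauchy-determinant computation gives $\bigl|\det\partial w/\partial\mu\bigr|=\prod_{j<k\le n-1}|\mu_j-\mu_k|\big/\prod_{j<k\le n}|\lambda_j-\lambda_k|$. For $(\lambda,\mu,l)\mapsto(z_1,\dots,z_n)$ I would compare elementary symmetric functions: the identity above reads $e_j(z)=e_j(\lambda)+il\,e_{j-1}(\mu)$ for $1\le j\le n$, so the real parts of $(e_1(z),\dots,e_n(z))$ are $(e_j(\lambda))_j$ and the imaginary parts are $(l,l\,e_1(\mu),\dots,l\,e_{n-1}(\mu))$; composing the two roots$\to$coefficients maps (Vandermonde determinants $\prod|\lambda_j-\lambda_k|$ and $\prod|\mu_j-\mu_k|$), the holomorphic roots$\to$coefficients map in the $z$ variables (whose real Jacobian is $\prod_{j<k}|z_j-z_k|^2$), and the triangular rescaling that splits off $l$ (contributing $l^{-(n-1)}$), the $\mu$'s cancel and one obtains $\bigl|\det\partial(\lambda,w,l)/\partial(z_1,\dots,z_n)\bigr|=\prod_{j<k}|z_j-z_k|^2\big/\bigl(l^{\,n-1}\prod_{j<k}|\lambda_j-\lambda_k|^2\bigr)$.

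Finally I would rewrite the $G\beta E_n$ density in the $z$-variables. The Gaussian weight transforms by $\sum_j\lambda_j^2=\bigl(\sum_j\re z_j\bigr)^2-2\,\re\,e_2(z)=\sum_j(\re z_j)^2+2\sum_{j<k}(\im z_j)(\im z_k)$, which is exactly the exponential in~\eqref{zFinal}. For the Vandermonde and weight factors the key identity is
\[
\prod_{1\le j<k\le n}|\lambda_j-\lambda_k|^2\prod_{j=1}^n w_j=\frac{1}{(2l)^n}\prod_{j,k=1}^n|z_j-\bar z_k|,
\]
proved as follows: $\prod_j w_j\prod_{j<k}(\lambda_j-\lambda_k)^2=\pm\prod_{k=1}^{n-1}R_1(\mu_k)$ with $R_1(z)=\prod_j(z-\lambda_j)$; since $\ol Q(\mu_k)=Q(\mu_k)$ one has $R_1(\mu_k)=Q(\mu_k)$, hence $\prod_k R_1(\mu_k)=\pm\prod_{j=1}^n R_2(z_j)$ with $R_2(z)=\prod_k(z-\mu_k)$; and $R_2(z_j)=\tfrac{i}{2l}\bigl(Q(z_j)-\ol Q(z_j)\bigr)=-\tfrac{i}{2l}\ol Q(z_j)=-\tfrac{i}{2l}\prod_k(z_j-\bar z_k)$ because $Q(z_j)=0$. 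Raising this identity to the power $\tfrac{\beta}{2}-1$ turns $\prod_{j<k}|\lambda_j-\lambda_k|^{\beta-2}\prod_j w_j^{\beta/2-1}$ into $(2l)^{-n(\beta/2-1)}\prod_{j,k}|z_j-\bar z_k|^{\beta/2-1}$; combining this with the factor $l^{-(n-1)}$ from the Jacobian produces the power $l^{-(\beta n/2-1)}$ and the constant $2^{-n(\beta/2-1)}$, so together with the normalization $g_{\beta,n}c_{\beta,n}$ of Lemma~\ref{DE3} one arrives at~\eqref{zFinal} with $h_{\beta,n}=2^{n(\beta/2-1)}g_{\beta,n}c_{\beta,n}$. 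No separate normalization integral is needed, since the answer is a pushforward of a probability measure. The two places that demand real work are thus (a) the inverse-spectral/attainability step and (b) keeping the Jacobian and the algebraic identity straight; everything else is bookkeeping.
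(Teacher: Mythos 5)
Your proposal is correct and follows essentially the same route as the paper: reduce to the spectral data $(\lambda_1,\dots,\lambda_n,w_1,\dots,w_{n-1},l)$ of the tridiagonal model, use the Arlinski{\u\i}--Tsekanovski{\u\i} bijection onto $(\bbC_+)^n$, compute the Jacobian via the identity $\prod_k(z-z_k)=(1+ilm(z))\prod_k(z-\lambda_k)$, and convert $\sum_j\lambda_j^2$ and $\prod_j w_j\prod_{j<k}|\lambda_j-\lambda_k|^2$ into the $z$-variables. The only differences are cosmetic: you route the Jacobian through the second spectrum $\mu_1,\dots,\mu_{n-1}$ and a Cauchy determinant where the paper uses the coefficients of an interpolating polynomial, you prove the key product identity by resultants where the paper uses residues of $m(z)$, and you sketch a self-contained Blaschke-product proof of the attainability step that the paper simply cites from \cite{AT}; all of these check out and yield the same Jacobian $l^{n-1}\prod_{j<k}|\lambda_j-\lambda_k|^2/\prod_{j<k}|z_j-z_k|^2$ and the same constant $h_{\beta,n}$.
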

\begin{remark}
In view of Theorem~\ref{thmModels}, distribution~\eqref{zFinal} with $\beta=1,2,4$ is the eigenvalue distribution of rank one perturbations of $GOE_n$, $GUE_n$, $GSE_n$, respectively.
\end{remark}
\begin{proof}
First of all, because the imaginary part of $\calJ$ is positive, we know that each of the eigenvalues $z_1,\ldots,z_n$ lies in $\bbC_+$. The result of Arlinski{\u\i}--Tsekanovski{\u\i}~\cite[Thm 5.1]{AT} says that the mapping
\begin{align}
&\{a_j\}_{j=1}^{n-1}, \{b_j\}_{j=1}^n, l \mapsto z_1,\ldots, z_n \\
&(0,\infty)^{n-1}\times \bbR^n \times (0,\infty) \to (\bbC_+)^n
\end{align}
is one-to-one and onto (up to permutations of $z_j$'s). Then so is the mapping $\{\lambda_j\}_{j=1}^{n}, \{w_j\}_{j=1}^{n-1}, l \mapsto z_1,\ldots, z_n$, where $\mu$~\eqref{spectralM} is the spectral measure of $\calJ$. Let us compute the Jacobian of this transformation.

\begin{lemma}\label{lmJacobian}
\begin{equation}\label{jacobian4}
\left| \det \frac{\partial\left(\re z_1,\ldots, \re z_n,\im z_1,\ldots, \im z_n \right)}{\partial \left(\lambda_1,\ldots,\lambda_n,w_1,\ldots,w_{n-1},l \right)} \right|
= l^{n-1} \prod_{j<k} \frac{|\lambda_j-\lambda_k|^2}{|z_j-z_k|^2}.
\end{equation}
\end{lemma}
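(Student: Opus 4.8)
The cleanest route is to factor the transformation $(\lambda,w,l)\mapsto(\re z,\im z)$ through the Jacobi coefficients $(a,b,l)$, since the map $(a,b,l)\mapsto(z_1,\dots,z_n)$ is the one for which Arlinski\u\i--Tsekanovski\u\i\ give explicit information, while the map $(\lambda,w)\mapsto(a,b)$ has a Jacobian already recorded by Dumitriu--Edelman. Concretely I would write
\[
\frac{\partial(\re z,\im z)}{\partial(\lambda,w,l)}
=\frac{\partial(\re z,\im z)}{\partial(a,b,l)}\cdot\frac{\partial(a,b,l)}{\partial(\lambda,w,l)}.
\]
The second factor is block-triangular in $l$, so its determinant equals the Jacobian of $(\lambda_1,\dots,\lambda_n,w_1,\dots,w_{n-1})\mapsto(b_1,\dots,b_n,a_1,\dots,a_{n-1})$ for an ordinary $n\times n$ Jacobi matrix; by the computation in \cite{DE} (the same one invoked in the proof of Proposition~\ref{Pr1}) this is, up to sign, $\prod_j a_j^{-1}\cdot\big(\prod_{j<k}|\lambda_j-\lambda_k|\big)^{-1}\cdot\prod_j\cdots$ — i.e. a ratio of the Vandermonde in the $\lambda$'s to a product of $a_j$'s, coming from the Vandermonde-type identities $\prod a_j^2 = \prod_{j<k}|\lambda_j-\lambda_k|^2\cdot(\text{something in }w)$. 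The key point is that this factor does not involve $l$ and, crucially, does not involve the $z_j$'s.

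**The first factor.** For $\partial(\re z,\im z)/\partial(a,b,l)$ I would use the characteristic-polynomial description of the eigenvalues of $\calJ + il\,I_{1\times 1}$. If $p(z)=\det(z-\calJ)$ and $p^{(1)}(z)=\det(z-\calJ^{(1)})$ (the Jacobi matrix with the first row and column deleted), then the $z_j$ are the roots of
\[
p(z) - i\,l\,p^{(1)}(z) = 0,
\]
equivalently of $\dfrac{p^{(1)}(z)}{p(z)} = \dfrac{1}{il}$ (the $m$-function identity behind Arlinski\u\i--Tsekanovski\u\i). I would differentiate this relation implicitly. Differentiating $F(z_r;a,b,l):=p(z_r)-il\,p^{(1)}(z_r)=0$ gives, for each root,
\[
\frac{\partial z_r}{\partial(a,b,l)} = -\frac{1}{F'(z_r)}\,\frac{\partial F}{\partial(a,b,l)}(z_r),
\]
where $F'(z_r)=\prod_{s\neq r}(z_r-z_s)$ up to a harmless constant (since $F$ is monic of degree $n$ with roots $z_s$). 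The $\partial F/\partial a_j$, $\partial F/\partial b_j$ are polynomials in $z_r$ of degree $\le n-1$ (derivatives of $p$ and $p^{(1)}$ with respect to Jacobi coefficients are classical — they are expressible via products of orthogonal polynomials of the first/second kind), and $\partial F/\partial l = -i\,p^{(1)}(z_r)$. So the Jacobian matrix is (a diagonal matrix with entries $-1/F'(z_r)$) times (a matrix whose rows are these polynomial data evaluated at the $z_r$'s). Taking determinants, the $\prod_r F'(z_r)^{-1}= \prod_{r\neq s}(z_r-z_s)^{-1}=\pm\prod_{j<k}|z_j-z_k|^{-2}$ factor pops out immediately — this is the $\prod_{j<k}|z_j-z_k|^{-2}$ in \eqref{jacobian4}. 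The remaining determinant of the polynomial-data matrix is a polynomial expression in the $z_r$; I expect it to evaluate (after column reduction, using that the top-degree behaviour of the relevant polynomials is controlled by $\prod a_j$, and isolating the $l$-column which contributes the single factor $l^{n-1}$ after clearing denominators) to something proportional to $l^{n-1}\prod_{j<k}(z_j-z_k)$ times $\prod a_j^{\pm}$ and $\prod_{j<k}|\lambda_j-\lambda_k|$ factors that precisely cancel against the second factor to leave $l^{n-1}\prod_{j<k}|\lambda_j-\lambda_k|^2/|z_j-z_k|^2$.

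**Main obstacle.** The real work is evaluating that remaining polynomial determinant and tracking the power of $l$. The honest way to get the $l^{n-1}$ and the surviving $(\lambda_j-\lambda_k)$ Vandermonde is a degree/leading-coefficient argument: view the determinant as a polynomial in $z_1,\dots,z_n$ (and in $l$), show by the usual antisymmetry argument that it is divisible by $\prod_{j<k}(z_j-z_k)$, compare total degrees to conclude the quotient is a function of $(a,b,l)$ only (no $z$-dependence), then evaluate that quotient by sending $l\to\infty$ or $l\to 0$ — in one of those limits the $z_j$'s degenerate to the $\lambda_j$'s together with one runaway eigenvalue $\sim il$, which simultaneously produces the $l^{n-1}$ and converts the $z$-Vandermonde into the $\lambda$-Vandermonde. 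This limiting bookkeeping (matching which eigenvalue runs off, and at what rate) is the delicate part; everything else is the routine implicit-function-theorem and Vandermonde algebra sketched above. Alternatively, one can avoid the Jacobi coefficients entirely and differentiate the $m$-function identity $\sum_j w_j/(\lambda_j - z_r) = i/l$ directly in $(\lambda,w,l)$, which may be the shortest path: it gives a Cauchy-matrix-type determinant whose evaluation is classical, and the single relation $\sum_j w_j = 1$ (to eliminate $w_n$) is what produces the extra structure. I would present whichever of these two computations is shorter.
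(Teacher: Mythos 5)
There is a genuine gap: the heart of the lemma is the evaluation of one determinant, and your proposal never evaluates it. Both of your suggested routes (through the Jacobi coefficients $(a,b,l)$, or through the $m$-function identity $\sum_j w_j/(\lambda_j-z_r)=i/l$) end with ``I expect it to evaluate to\dots'' / ``this limiting bookkeeping is the delicate part'' / ``whose evaluation is classical.'' That deferred bookkeeping \emph{is} the lemma; a degree-and-antisymmetry argument plus an $l\to\infty$ degeneration could in principle be made to work, but as written nothing pins down the quotient, the power $l^{n-1}$, or the surviving $\lambda$-Vandermonde, and the claimed cancellation against the Dumitriu--Edelman factor $\partial(a,b)/\partial(\lambda,w)$ is asserted rather than shown.

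Moreover, the one step you do carry out explicitly is wrong for complex roots. You write $\prod_{r\neq s}(z_r-z_s)^{-1}=\pm\prod_{j<k}|z_j-z_k|^{-2}$, but $\prod_{r\neq s}(z_r-z_s)=(-1)^{n(n-1)/2}\prod_{j<k}(z_j-z_k)^2$, and $(z_j-z_k)^2\neq|z_j-z_k|^2$ here. More to the point, your map $(a,b,l)\mapsto(z_1,\dots,z_n)$ goes from $2n$ \emph{real} variables to $n$ \emph{complex} ones, so it is not holomorphic and the real Jacobian is not a modulus of a complex determinant. Rescaling the complex row $v_r$ by $c_r=-1/F'(z_r)$ multiplies the corresponding pair of real rows by a $2\times2$ rotation-dilation of determinant $|c_r|^2$, so the prefactor that ``pops out'' is $\prod_r|F'(z_r)|^{-2}=\prod_{j<k}|z_j-z_k|^{-4}$, not $\prod_{j<k}|z_j-z_k|^{-2}$; the remaining polynomial-data determinant must then still contribute a factor $\prod_{j<k}|z_j-z_k|^{+2}$, which again lands squarely in the part you did not compute. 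For comparison, the paper avoids all of this by changing variables to the coefficients $\kappa_0,\dots,\kappa_{n-1}$ of $\det(z-\calJ_l)$: the identity $\sum_j(\re\kappa_j)z^j=\prod_j(z-\lambda_j)$ makes $\partial(\re\kappa)/\partial(\lambda)$ a $\lambda$-Vandermonde and kills $\partial(\re\kappa)/\partial(w,l)$ outright (block triangularity), the imaginary parts yield $l^{n-1}\prod_{j<k}|\lambda_j-\lambda_k|$ by an interpolating-polynomial argument, and the only place $|z_j-z_k|^2$ enters is the genuinely holomorphic roots-to-coefficients map $\bbC^n\to\bbC^n$, where the squared-modulus rule is legitimate.
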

\begin{proof}
Denote $\calJ_{l} = \calJ + i l I_{1\times1}$.
Define $m(z)=\langle e_1, (\calJ-z)^{-1} e_1\rangle = \sum_{j=1}^n \frac{w_j}{\lambda_j-z}$. Let $\sum_{j=0}^n \kappa_j z^j = \det(z-\calJ_{l})=\prod_{j=1}^n (z-z_j) $, where $\kappa_n=1$. Then
\begin{multline}\label{charPoly}
\prod_{j=1}^n (z-z_j) = \sum_{j=0}^n \kappa_j z^j  = \det(z-\calJ- il I_{1\times1})  \\
= \det(z-\calJ) \det( I -(z-\calJ)^{-1} il I_{1\times1}) = \left( 1+il m(z)\right) \prod_{j=1}^n (z-\lambda_j).
\end{multline}
By taking the real parts we obtain
\begin{equation}\label{realParts}
\tfrac12 \prod_{j=1}^n (z-z_j)+\tfrac12 \prod_{j=1}^n (z-\bar{z}_j) = \sum_{j=0}^n (\re\kappa_j) z^j = \prod_{j=1}^n (z-\lambda_j),
\end{equation}
which implies
\begin{equation}\label{realPartJac1}
\left| \det\frac{\partial\left(\re \kappa_0,\ldots, \re\kappa_{n-1} \right)}{\partial \left(\lambda_1,\ldots,\lambda_n \right)} \right| = \prod_{j<k} |\lambda_j-\lambda_k|,
\end{equation}
and
\begin{equation}\label{realPartJac2}
\frac{\partial\left(\re \kappa_0,\ldots, \re\kappa_{n-1} \right)}{\partial \left(w_1,\ldots,w_{n-1},l \right)} = \bdnot,
\end{equation}
the $n\times n$ zero matrix. Thus we just need to evaluate $\det\frac{\partial\left(\im \kappa_0,\ldots, \im\kappa_{n-1} \right)}{\partial \left(w_1,\ldots,w_{n-1},l \right)}$, regarding $\lambda_j$'s as constants.

The imaginary parts of~\eqref{charPoly} give
\begin{multline}\label{imagPart}
\sum_{j=0}^{n-1} (\im \kappa_j) z^j = l m(z) \prod_{j=1}^n (z-\lambda_j) = -l \sum_{j=1}^n w_j \prod_{\substack{1 \le k \le n \\ k\ne j}} (z-\lambda_k) \\
=
-l \left[ \sum_{j=1}^{n-1} w_j (\lambda_j-\lambda_n)\prod_{\substack{1 \le k \le n-1 \\ k\ne j}} (z-\lambda_k) \right] - l \prod_{k=1}^{n-1} (z-\lambda_k)
\end{multline}
Denote the polynomial in the square brackets as $s(z)=\sum_{j=0}^{n-2} s_j z^j$. The above equality implies
\begin{equation}\label{jacobian1}
\det\frac{\partial\left(\im \kappa_0,\ldots, \im\kappa_{n-1} \right)}{\partial \left(s_0,\ldots,s_{n-2},l \right)} = (-1)\cdot (-l)^{n-1}.
\end{equation}
Now note that $s(z)$ can  trivially be rewritten as
$$s(z)=\sum_{j=1}^{n-1} \widetilde{w}_j \prod_{\substack{1 \le k \le n-1 \\ k\ne j}} \frac{z-\lambda_k}{\lambda_j-\lambda_k},$$
 where
\begin{equation}\label{wTilde}
\widetilde{w}_j = w_j (\lambda_j-\lambda_n) \prod_{\substack{1 \le k \le n-1 \\ k\ne j}} (\lambda_j-\lambda_k).
\end{equation}
One can now recognize that $s(z)$ is the interpolating polynomial $s(\lambda_k) = \widetilde{w}_k$ for $k=1,\ldots,n-1$. This implies
\begin{equation}\label{jacobian2}
\left| \det\frac{\partial\left(\widetilde{w}_1,\ldots, \widetilde{w}_{n-1} \right)}{\partial \left(s_0,\ldots,s_{n-2} \right)} \right| = \prod_{1 \le j<k \le n-1} |\lambda_j - \lambda_k|.
\end{equation}
Finally, from~\eqref{wTilde},
\begin{equation}\label{jacobian3}
\det\frac{\partial\left(\widetilde{w}_1,\ldots, \widetilde{w}_{n-1} \right)}{\partial \left(w_1,\ldots,w_{n-1} \right)} = \prod_{j=1}^{n-1} (\lambda_j - \lambda_n) \prod_{1 \le j<k \le n-1} |\lambda_j - \lambda_k|^2.
\end{equation}
Combining~\eqref{jacobian1}, \eqref{jacobian2}, \eqref{jacobian3}, we get
\begin{equation}
\left| \det\frac{\partial\left(\im \kappa_0,\ldots, \im\kappa_{n-1} \right)}{\partial \left(w_1,\ldots,w_{n-1},l \right)} \right| = l^{n-1} \prod_{1\le j<k \le n} |\lambda_j-\lambda_k|.
\end{equation}
Using~\eqref{realPartJac1},~\eqref{realPartJac2}, and the fact that the Jacobian of the transformation from  $\re z_1,\ldots, \re z_n,\im z_1,\ldots, \im z_n$ to $\re \kappa_0,\im \kappa_0,\ldots,\re \kappa_{n-1},\im \kappa_{n-1}$ is equal to $\prod_{j<k} |z_j-z_k|^2$, we obtain
\begin{equation}\label{jacobian4}
\left| \det\frac{\partial\left(\re z_1,\ldots, \re z_n,\im z_1,\ldots, \im z_n \right)}{\partial \left(\lambda_1,\ldots,\lambda_n,w_1,\ldots,w_{n-1},l \right)} \right| = l^{n-1} \prod_{j<k} \frac{|\lambda_j-\lambda_k|^2}{|z_j-z_k|^2}.
\end{equation}
\end{proof}

The joint distribution of $\{\lambda_j\}_{j=1}^{n}, \{w_j\}_{j=1}^{n-1}, l$ is
\begin{equation*}
\tfrac{1}{g_{\beta,n} c_{\beta,n}} \prod_{j<k} |\lambda_j-\lambda_k|^\beta \prod_{j=1}^n e^{-  \lambda_j^2/2} \prod_{j=1}^n w_j^{\beta/2-1} F(l) d\lambda_1\ldots d\lambda_n dw_1\ldots dw_{n-1} dl.
\end{equation*}
Using this and the Jacobian computation, we obtain that the distribution of $z_j$'s is
\begin{equation}\label{zPrefinal}
\tfrac{1}{g_{\beta,n} c_{\beta,n}}  \prod_{j<k} |\lambda_j-\lambda_k|^{\beta-2} \prod_{j=1}^n e^{-  \lambda_j^2/2} \prod_{j=1}^n w_j^{\beta/2-1} \frac{F(l)}{l^{n-1}} \prod_{j<k} |z_j-z_k|^2 d^2 z_1\ldots d^2 z_n.
\end{equation}
Note that
\begin{align}
&\label{el} l=-\im \kappa_{n-1} = \sum_{j=1}^n \im z_j, \\
&\label{sum} \sum_{j=1}^n \lambda_j = \sum_{j=1}^n \re z_j, \\
& \sum_{j \ne k} \lambda_j \lambda_k = \sum_{j\ne k} \re (z_j z_k).
\end{align}
The first equation comes from~\eqref{imagPart}, while the latter two follow from~\eqref{realParts}. Then
\begin{equation}\label{lambdaSquared}
\sum_{j=1}^n \lambda_j^2 = \left(\sum_{j=1}^n  \re z_j\right)^2 - \sum_{j\ne k} \re (z_j z_k) = \sum_{j=1}^n (\re z_j)^2 + 2 \sum_{j<k} (\im z_j)(\im z_k).
\end{equation}

Finally, from~\eqref{charPoly},
\begin{equation}\label{eWeights1}
 -i lw_j = il \res_{z=\lambda_j} m(z) = \res_{z=\lambda_j} \prod_{k=1}^n \frac{z-z_k}{z-\lambda_k} =  \frac{\prod_{k=1}^n (\lambda_j-z_k)}{\prod_{k\ne j} (\lambda_j-\lambda_k)},
 \end{equation}
 so
\begin{equation}\label{prodW}
\prod_{j=1}^n w_j = (\tfrac{i}{l})^n  \frac{\prod_{j,k} (\lambda_j-z_k)}{\prod_{j<k} |\lambda_j-\lambda_k|^2} = (\tfrac{i}{l})^n \tfrac{1}{2^n} \frac{\prod_{j,k} (\bar{z}_j-z_k)}{\prod_{j<k} |\lambda_j-\lambda_k|^2} =  \tfrac{1}{(2l)^n} \frac{\prod_{j,k} |\bar{z}_j-z_k|}{\prod_{j<k} |\lambda_j-\lambda_k|^2},
\end{equation}
where we used~\eqref{realParts} with $z=z_k$, $k=1,\ldots,n$. Combining~\eqref{el},~\eqref{lambdaSquared},~\eqref{prodW} with~\eqref{zPrefinal}, we obtain~\eqref{zFinal}.
\end{proof}

\subsubsection*{Examples}
(1) Since $\Gamma$ in Theorem~\ref{thmModels} has rank 1, we can decompose it as $\Gamma = L^* L$, where $L = (l_{1j})_{j=1}^n$ is an $1\times n$ matrix. Assuming the entries $l_{1j}$ of $L$ are independent and normal $N(0,\sigma \mathbf{I}_\beta)$, then $l=\sum_{j=1}^n |l_{1j}|^2 \sim \sigma^2 \chi^2_{\beta n}$, that is $F(l) = \tfrac{1}{(\sqrt2 \sigma)^{\beta n} \Gamma(\beta n/2)} l^{\beta n/2-1}e^{-l/(2\sigma^2)}$. In this special case, distribution~\eqref{zFinal} becomes
\begin{multline}
\tfrac{1}{(\sqrt2 \sigma)^{\beta n} \Gamma(\beta n/2) c_{\beta,n}g_{\beta,n}} \prod_{j,k=1}^n |z_j-\bar{z}_k |^{\beta/2 -1} \prod_{j<k} |z_j-z_k|^2 \\
\times e^{-\frac12 \sum_{j=1}^n (\re z_j)^2 - \sum_{j<k} (\im z_j)(\im z_k)-\tfrac{1}{2\sigma^2} \sum_{j=1}^n \im z_j }  d^2 z_1\ldots d^2 z_n.
\end{multline}
agreeing with the formula obtained by St\"{o}ckman--\v{S}eba~\cite[Eq (4.4)]{StoSeb}.

%

\medskip

(2) If one instead takes, perhaps less naturally, $l\sim \chi_{\beta n/2}$, then the eigenvalue density simplifies to
\begin{equation*}
\propto
\prod_{j,k=1}^n |z_j-\bar{z}_k |^{\beta/2 -1} \prod_{j<k} |z_j-z_k|^2 e^{-\frac12 \sum_{j=1}^n |z_j|^2 - 2\sum_{j<k} (\im z_j)(\im z_k)}  d^2 z_1\ldots d^2 z_n.
\end{equation*}

\subsection{Perturbations of Laguerre $\beta$-ensembles}

Let us first address the question of which eigenvalue configurations are possible for rank one perturbations of Wishart or $\beta$-Laguerre ensembles. 
Unlike the Gaussian case which was easy due to the application of Arlinski{\u\i}--Tsekanovski{\u\i}'s~\cite[Thm 5.1]{AT}, here we perturb a positive (semi-) definite matrix.

\begin{proposition}\label{prConfigurations}
(i)
Let
\begin{equation}\label{jj}
\calJ_l = \calJ + il I_{1\times 1},
\end{equation}
where $l>0$ and $\calJ$ is an $n\times n$ positive definite $($real$)$ Jacobi matrix~\eqref{jacobi} with $a_j>0$, $j=1,\ldots,n-1$. Its eigenvalues, counting algebraic multiplicities, belong to
\begin{equation}\label{configurationSpaceLaguerre1}
\left\{ (z_j)_{j=1}^n \in (\bbC_+)^n :  \sum_{j=1}^n \Arg \,z_j < \tfrac\pi2 \right\}.
\end{equation}
Moreover, for every configuration of $n$ points from~\eqref{configurationSpaceLaguerre1} there exists a unique matrix $\calJ_l$ of the form above with such a system of eigenvalues.

(ii) Let
$$
\calJ_l = \calJ + il I_{1\times 1},
$$
where $l>0$ and $\calJ$ is an $(m+1)\times (m+1)$ positive semi-definite $($real$)$ Jacobi matrix~\eqref{jacobi} with $a_j>0$, $j=1,\ldots,m$, satisfying $\det \calJ = 0$. Its eigenvalues, counting with their algebraic multiplicities, belong to
\begin{equation}\label{configurationSpaceLaguerre2}
\left\{ (z_j)_{j=1}^{m+1} \in (\bbC_+)^{m+1} :  \sum_{j=1}^{m+1} \Arg \,z_j  = \tfrac\pi2 \right\}.
\end{equation}
Moreover, for every configuration of $m+1$ points from~\eqref{configurationSpaceLaguerre2} there exists a unique matrix $\calJ_l$ of the form above with such a system of eigenvalues.
\end{proposition}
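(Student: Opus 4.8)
The plan is to reduce both parts to the characteristic polynomial identity~\eqref{charPoly}, which here reads $\det(z - \calJ_l) = (1 + il\,m(z))\det(z - \calJ)$, where $m(z) = \langle \mathbf{e}_1, (\calJ - z)^{-1}\mathbf{e}_1\rangle = \int \frac{d\mu(x)}{x - z}$ is the $m$-function (Cauchy transform) of the spectral measure $\mu$ of $\calJ$. In case (i), $\calJ$ is positive definite, so $\mu$ is a probability measure supported on $n$ points in $(0,\infty)$; in case (ii), $\calJ$ is positive semidefinite with $\det\calJ = 0$ and all $a_j > 0$, so by Favard plus the analysis in the proof of Proposition~\ref{Pr1}, $\mu = w_0\delta_0 + \sum_{j=1}^m w_j\delta_{\lambda_j}$ with all $w_j > 0$ and $\lambda_j > 0$ distinct. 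The key analytic object is the argument: I want to locate $\sum_j \Arg z_j$, and the natural tool is to track $\Arg\det(z - \calJ_l) = \sum_j \Arg(z - z_j)$ — wait, more precisely, I should evaluate the product $\prod_j (-z_j) = (-1)^n\det(-\calJ_l) = \det(\calJ_l)$ and compute its argument. From~\eqref{charPoly} at $z = 0$ we get $\prod_{j=1}^n(-z_j) = (1 + il\,m(0))\prod_{j=1}^n(-\lambda_j)$, i.e. $\det \calJ_l = (1 + il\,m(0))\det\calJ$, where $m(0) = \int \frac{d\mu(x)}{x}$.

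Now the sign/argument bookkeeping: in case (i), $m(0) = \sum_j w_j/\lambda_j > 0$ is a finite positive real, $\det\calJ > 0$, so $\det\calJ_l = \det\calJ \cdot(1 + il\,m(0))$ is a complex number in the open first quadrant, hence $\Arg(\det\calJ_l) \in (0, \pi/2)$. On the other hand $\det\calJ_l = \prod_j(-z_j) \cdot(-1)^n$... let me instead just say $\prod_{j=1}^n z_j = \det\calJ_l$ up to the sign $(-1)^n$ coming from $\det(z-\calJ_l)$ at $z=0$ being $\det(-\calJ_l) = (-1)^n\det\calJ_l$; so $\prod_j z_j = \det\calJ \cdot (1 + il\,m(0))$, a first-quadrant number, giving $\sum_j \Arg z_j \equiv \Arg(\det\calJ_l) \pmod{2\pi}$. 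Since each $\Arg z_j \in (0,\pi)$, the sum lies in $(0, n\pi)$; but I need it to actually equal the principal argument, so I must argue it lies in $(0,\pi/2)$ and not $(2\pi, 2\pi + \pi/2)$ etc. For case (ii), $m(0) = \int\frac{d\mu(x)}{x}$ now has an atom at $0$, so $m(0) = +\infty$; correspondingly $1 + il\,m(z) \to \infty$ as $z \to 0$, and the finite quantity is $z(1 + il\,m(z)) \to il\,w_0$ (pure positive imaginary). Thus $\prod_{j=1}^{m+1} z_j = il\,w_0 \cdot \prod_{j=1}^m(-\lambda_j) \cdot (-1)^{?}$ — carefully: $\det(z - \calJ_l) = (1 + il\,m(z))\det(z - \calJ)$, multiply by $z$ and let $z\to 0$; the left side $\to$ the coefficient $-\prod_{j}(-z_j)\cdot$... the point is that $\prod z_j$ is $il w_0$ times a positive real, i.e. purely positive imaginary, so $\sum\Arg z_j = \pi/2$ mod $2\pi$.

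To pin down the sum modulo $2\pi$ (the main obstacle), I would use a homotopy/continuity argument: connect $\calJ_l$ to $\calJ + i\varepsilon l I_{1\times 1}$ and let $l \to 0^+$, or better, scale $\calJ \mapsto t\calJ$ with $t \to \infty$ keeping $l$ fixed, under which the eigenvalues $z_j$ of $t\calJ + ilI_{1\times1}$ tend (after dividing by $t$) to the eigenvalues $\lambda_j/t \to$ of $\calJ$, i.e. each $\Arg z_j \to 0$; the sum $\sum\Arg z_j$ varies continuously and never hits a multiple of $2\pi$ (since $\prod z_j$ has fixed argument in $(0,\pi/2]$ and is never zero because $\calJ_l$ is invertible — $\re\langle x,\calJ_l x\rangle \ge 0$ with the imaginary part strict on $\mathbf{e}_1$, so $0\notin\spec\calJ_l$), so it stays in the connected component $(0,\pi/2)$ resp. equals $\pi/2$ throughout. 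For the converse (existence and uniqueness of $\calJ_l$ with prescribed eigenvalues), I would invoke the inverse spectral correspondence exactly as in the Gaussian case: given $(z_j)$ in the stated set, define $\prod(z - z_j) =: P(z)$, take $\lambda_j$ to be the roots of $\frac{1}{2}(P(z) + \overline{P}(z))$ (the real part, monic of degree $n$), which by~\eqref{realParts} must be the eigenvalues of $\calJ$; recover the weights $w_j$ from the residues via~\eqref{eWeights1} and $l$ from $-\im\kappa_{n-1} = \sum\im z_j$; check $w_j > 0$, $\sum w_j = 1$ (resp. $w_0 > 0$ in case (ii), which is where the angle condition $\sum\Arg z_j = \pi/2$ versus $< \pi/2$ enters — it is exactly the condition for $\mu$ to have, or not have, mass at $0$), positivity $\lambda_j > 0$ (needs the angle sum to be small enough, plus interlacing of the roots of $\re P$ between... ), and distinctness, then reconstruct $\calJ$ via Favard and $\calJ_l = \calJ + ilI_{1\times1}$. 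I expect the delicate points to be (a) the mod-$2\pi$ determination of the argument sum, handled by the continuity argument above, and (b) verifying $\lambda_j > 0$ for the reconstructed measure in the converse direction, which I would get from the fact that $\re P$ has all real simple roots (Hermite–Biehler type argument, since $P$ has all roots in $\bbC_+$) together with a sign chase at $z = 0$ using precisely the hypothesis on $\sum\Arg z_j$.
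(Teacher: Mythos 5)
Your overall strategy coincides with the paper's: factor the characteristic polynomial as $\det(z-\calJ_l)=(1+il\,m(z))\det(z-\calJ)$, read off that $\prod_j z_j$ lies in the open first quadrant in case (i) and on the positive imaginary axis in case (ii), pin down $\sum_j\Arg z_j$ modulo $2\pi$ by a continuity argument in the perturbation parameter, and obtain existence and uniqueness in the converse direction from the Arlinski{\u\i}--Tsekanovski{\u\i} bijection between configurations in $(\bbC_+)^n$ and matrices $\calJ+ilI_{1\times1}$ with $\calJ$ real Jacobi, $a_j>0$, $l>0$. The forward direction and the mod-$2\pi$ determination (via $l\to0^+$ or your rescaling $t\calJ$, $t\to\infty$) are sound as you describe them.

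The genuine gap is in the converse direction, at exactly the point you flag as delicate: showing that the real Jacobi matrix $\calJ$ produced by the inverse spectral theorem is positive (semi-)definite, i.e.\ that the reconstructed $\lambda_j$ are all positive. The two tools you propose do not suffice. A Hermite--Biehler argument only gives that $\re P$ has real simple zeros, which is automatic anyway (the $\lambda_j$ are eigenvalues of a real symmetric matrix); and a ``sign chase at $z=0$'' only controls the constant term, $\prod_j\lambda_j=\re\prod_j z_j>0$, which merely shows that an \emph{even number} of the $\lambda_j$ are negative. The missing observation is that the hypothesis $\sum_j\Arg z_j<\pi/2$ (with all $z_j\in\bbC_+$) forces \emph{every} elementary symmetric polynomial $s_k(z_1,\dots,z_n)$, $1\le k\le n$, into the open first quadrant: each monomial $z_{j_1}\cdots z_{j_k}$ has argument in $\bigl(0,\sum_j\Arg z_j\bigr)$, and a sum of numbers in that open convex cone stays in it. Taking real parts in~\eqref{realParts} then gives $s_k(\lambda_1,\dots,\lambda_n)>0$ for all $k$, so $\prod_j(z-\lambda_j)$ has coefficients of alternating sign and, by Descartes' rule of signs, no negative roots; this is how the paper closes the argument. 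The same device is needed in case (ii) to show that the nonzero $\lambda_j$ are positive and that exactly one eigenvalue vanishes (there $\sum_j\Arg z_j=\pi/2$ puts $s_{m+1}(z)$ on $i\bbR_+$, hence $s_{m+1}(\lambda)=0$, while $s_1,\dots,s_m$ remain in the open quadrant, which is also what makes your reconstructed weight $w_0$ strictly positive). Your intuition that the angle condition is exactly what separates the two cases is right, but using only $s_n$ you cannot exclude, for instance, two negative $\lambda_j$'s.
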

\begin{proof}
As before, let $z_j$'s be the eigenvalues of $\calJ_l$; let $\lambda_j$'s and $w_j$'s be the eigenvalues and eigenweights of the spectral measure of $\calJ$ (which is of the form~\eqref{spectralM} with~\eqref{domain2} for the case (i) and~\eqref{spectralM2} with~\eqref{domain3} for the case (ii)). By~\cite{AT}, $z_j\in\bbC_+$ for every $j$.


Consider now case (i). Equations~\eqref{realParts} and~\eqref{imagPart} imply
\begin{align}
\label{elementaryEqual} \re s_k(z_1,\ldots,z_n) &= s_k(\lambda_1,\ldots,\lambda_n), \quad k=1,2,\ldots,n; \\
\label{elementaryEqual2} \im s_k(z_1,\ldots,z_n) &= l \sum_{j=1}^n w_j s_{k-1}(\{\lambda_t\}_{t\ne j}), \quad k=1,2,\ldots,n,
\end{align}
respectively,
where $s_0:=1$, and $s_k$ ($k\ge 1$) is the $k$-th elementary symmetric polynomial
\begin{equation}\label{symmPoly}
s_k(z_1,\ldots,z_n) := \sum_{1\le j_1<j_2<\ldots<j_k\le n} z_{j_1}\ldots z_{j_k}.
\end{equation}
Since for each $j$, $\lambda_j > 0, w_j>0, l>0$, we obtain that $z_1,\ldots,z_n$ must belong to
\begin{equation}\label{configurationSpaceLaguerreAlt}
\left\{ (z_j)_{j=1}^n \in (\bbC_+)^n : s_k(z_1,\ldots,z_n) \in Q_1, \quad k=1,2,\ldots,n \right\},
\end{equation}
where $Q_1:=\{z: 0<\Arg z<\pi/2 \}$.
Conversely, take a sequence of points from~\eqref{configurationSpaceLaguerreAlt}. Since this sequence belongs to $(\bbC_+)^n$, we know from~\cite[Thm 5.1]{AT} that there exists a unique  matrix of the form $\calJ + il I_{1\times 1}$ with $l>0$ and $a_j>0$, $j=1,\ldots,n-1$. We claim that in fact $\calJ$ is positive definite, that is, $\lambda_j>0$ for all $j$. Indeed, equation~\eqref{realParts} along with the positivity of~\eqref{elementaryEqual} implies that $\lambda_1,\ldots,\lambda_n$ are the real roots of the polynomial $\prod_{j=1}^n (z-\lambda_j)$ with alternating signs of the coefficients. By  Descartes' rule of signs, we know such a polynomial cannot have negative zeros. This means that all $\lambda_j$'s are indeed positive. Therefore~\eqref{configurationSpaceLaguerreAlt} is precisely the space of all possible eigenvalue configurations of $H_{eff}$. Let us now show that it coincides with~\eqref{configurationSpaceLaguerre1}.

It is elementary that~\eqref{configurationSpaceLaguerre1} is a subset of~\eqref{configurationSpaceLaguerreAlt}. 
To see the converse, take any sequence from~\eqref{configurationSpaceLaguerreAlt}. Since $ s_n(z_1,\ldots,z_n)=  z_1 z_2 \ldots z_n \in Q_1$, we must have that
\begin{equation}\label{sumArguments}
0+2k\pi< \Arg z_1 + \Arg z_2 +\ldots + \Arg z_n < \pi/2 + 2k\pi
\end{equation}
for some integer $k\ge 0$. We already know that these $z_1,\ldots,z_n$ are the eigenvalues of $\calJ+ il I_1$, where $\calJ$ is \textit{positive definite}. Let us now fix $\calJ$ and view $z_1,\ldots,z_n$ as functions of $l\ge0$ only. Each of these functions is continuous and never passes through $0$. For any $0< l<\infty$, we have~\eqref{sumArguments} for some $k$. But when $l=0$ the sum of the arguments is zero. By continuity $k=0$ for any $l$. This shows that~\eqref{configurationSpaceLaguerreAlt} is a subset of ~\eqref{configurationSpaceLaguerre1}, and therefore they coincide.

To deal with the case (ii), we use similar arguments with $m+1$ instead of $n$ and $\lambda_1,\ldots,\lambda_m,0$ as the eigenvalues (with $\lambda_j>0, j=1,\ldots, m$). With this in mind, equations~\eqref{elementaryEqual} and~\eqref{elementaryEqual2} imply that the eigenvalues $z_1,\ldots,z_{m+1}$ of $\calJ+ilI_{1\times 1}$ belong to
\begin{multline}\label{configurationSpaceLaguerreAlt2}
\big\{ (z_j)_{j=1}^{m+1} \in (\bbC_+)^{m+1} : s_{m+1}(z_1,\ldots,z_{m+1}) \in i\bbR_+; \big. \\
\big. s_k(z_1,\ldots,z_{m+1}) \in Q_1, \quad k=1,2,\ldots,m \big\},
\end{multline}
where $\bbR_+ = \{z\in\bbR: z>0\}$. Conversely,  by~\cite[Thm 5.1]{AT}, any configuration of point from~\eqref{configurationSpaceLaguerreAlt2} coincides with eigenvalues of some $\calJ+ilI_{1\times 1}$, $l>0$. One obtains that the eigenvalues $\lambda_1,\ldots,\lambda_{m+1}$ of $\calJ$ satisfies $s_{k}(\lambda_1,\ldots,\lambda_{m+1})>0$ for $k=1,\ldots,m$ and $s_{m+1}(\lambda_1,\ldots,\lambda_{m+1})=0$. This implies $\lambda_j>0$ for all $j$ except for one zero eigenvalue.

Finally, let us show that~\eqref{configurationSpaceLaguerreAlt2} coincides with~\eqref{configurationSpaceLaguerre2}. The inclusion~\eqref{configurationSpaceLaguerre2}$\subseteq$\eqref{configurationSpaceLaguerreAlt2} is easy. Conversely, take any configuration $\{z_j\}_{j=1}^{m+1}$ from~\eqref{configurationSpaceLaguerreAlt2}. By the above, these points are the eigenvalues of some $\calJ+ilI_{1\times 1}$ with $l>0$, where $\calJ$ has eigenvalues $\{0,\lambda_1,\ldots,\lambda_m\}$ with $\lambda_j>0$ for $1\le j\le m$. Since $s_{m+1} \in i\bbR_+$ in~\eqref{configurationSpaceLaguerreAlt2}, we have
\begin{equation}\label{sumArguments2}
\Arg \, z_1 + \Arg \, z_2 +\ldots + \Arg \,  z_{m+1} = \pi/2 + 2k\pi
\end{equation}
for some integer $k\ge 0$.  After reordering, we can assume that $z_j \to \lambda_j$, $1\le j\le m$, and $z_{m+1}\to 0$ when $l\to0$ (while $\calJ$ is fixed). Therefore $\Arg \, z_j \to 0$ as $l\to 0$ for $1\le j\le m$, while $0\le \Arg \, z_{m+1}\le \pi/2$ for any $l$. This proves that $k=0$, and so~\eqref{configurationSpaceLaguerreAlt2}$\subseteq$\eqref{configurationSpaceLaguerre2}, finishing the proof.
\end{proof}

The following may be known, but if not, it may be of interest on its own. Denote $\bar\bbC_+ := \{z:\im z \ge 0\}$.
\begin{corollary}
Let
$$
H_{eff} = H+ i\Gamma,
$$
where $H$ and $\Gamma$ are positive semi-definite with $\rank \Gamma \le 1$. The eigenvalues of $H_{eff}$, counting with their algebraic multiplicities, belong to
\begin{equation*}
\left\{ (z_j)_{j=1}^n \in (\bar\bbC_+)^n :  \sum_{j=1}^n \Arg \,z_j  \le \tfrac\pi2 \right\},
\end{equation*}
and every such a configuration may occur.
\end{corollary}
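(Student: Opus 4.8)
The plan is to derive this from Proposition~\ref{prConfigurations} by the same tridiagonalization reduction used before Theorem~\ref{thmModels}, treating the cyclic block generated by $\mathbf{e}_1$ and its orthogonal complement separately.

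For the containment I would argue as follows. If $\rank\Gamma=0$ then $H_{eff}=H\ge\bdnot$, so its spectrum lies in $[0,\infty)$ and $\sum_j\Arg z_j=0$. If $\rank\Gamma=1$, diagonalize $\Gamma=U^*(lI_{1\times1})U$ with $l=\norm{\Gamma}_{HS}>0$ and $U$ unitary; since $UHU^*\ge\bdnot$ we may assume $\Gamma=lI_{1\times1}$. Apply the tridiagonalization procedure of Subsection~\ref{ssTridiagonalization} to $H$ with respect to $\mathbf{e}_1$: there is a unitary $S$ with $S\mathbf{e}_1=S^*\mathbf{e}_1=\mathbf{e}_1$ and $SHS^*=\calJ$, where $\calJ=\calJ_1\oplus\calJ'$ with $\calJ_1$ the $k\times k$ Jacobi matrix on $\spann\{H^j\mathbf{e}_1:j\ge0\}$ (so all $a_j>0$, $1\le j\le k-1$) and $\calJ'$ Hermitian; both summands are $\ge\bdnot$ because $\calJ$ is unitarily equivalent to $H$. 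Since $SI_{1\times1}S^*=I_{1\times1}$ and the $(1,1)$ slot sits in the $\calJ_1$ block, $H_{eff}$ is unitarily equivalent to $(\calJ_1+ilI_{1\times1})\oplus\calJ'$. The eigenvalues of $\calJ'$ lie in $[0,\infty)$, contributing $0$ to the sum of arguments. Because $\calJ_1$ is Jacobi with strictly positive off-diagonal entries it has simple spectrum, so as a positive semi-definite matrix it is either positive definite or has exactly one zero eigenvalue; in the former case Proposition~\ref{prConfigurations}(i) gives $\sum\Arg z<\pi/2$ over the eigenvalues of $\calJ_1+ilI_{1\times1}$, and in the latter (with $m+1=k$) Proposition~\ref{prConfigurations}(ii) gives $\sum\Arg z=\pi/2$. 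Adding the two parts, the eigenvalues of $H_{eff}$ lie in $(\bar\bbC_+)^n$ with $\sum_j\Arg z_j\le\pi/2$.

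For attainability, fix $(z_j)_{j=1}^n\in(\bar\bbC_+)^n$ with $\sum_j\Arg z_j\le\pi/2$; this forces each $\Arg z_j\in[0,\pi/2]$ and $z_j\in[0,\infty)$ whenever $\im z_j=0$ (with the convention $\Arg 0=0$). Put $I=\{j:\im z_j>0\}$ and $J=\{j:\im z_j=0\}$. I would realize the $J$-points by the diagonal matrix $D=\mathrm{diag}(z_j:j\in J)\ge\bdnot$ with no perturbation, and the $I$-points (for which $\sum_{j\in I}\Arg z_j=\sum_j\Arg z_j\le\pi/2$ and all $z_j\in\bbC_+$) by Proposition~\ref{prConfigurations}: part (i) if this sum is $<\pi/2$, part (ii) if it equals $\pi/2$ (so $I\ne\emptyset$), in either case producing an $\abs{I}\times\abs{I}$ matrix $\calJ_1+ilI_{1\times1}$ with $\calJ_1\ge\bdnot$ and $l>0$ whose eigenvalues with algebraic multiplicities are $\{z_j:j\in I\}$. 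Then $H:=\calJ_1\oplus D\ge\bdnot$ and $\Gamma:=lI_{1\times1}$ (of rank $\le1$; take $\Gamma=\bdnot$ if $I=\emptyset$) yield $H+i\Gamma$ with exactly the prescribed eigenvalues, since characteristic polynomials multiply over direct sums.

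The argument is largely a reduction to Proposition~\ref{prConfigurations}; the step that needs a little care is isolating the cyclic block $\calJ_1$ in the forward direction and using the dichotomy ``positive definite versus exactly one zero eigenvalue'' to decide which half of Proposition~\ref{prConfigurations} applies to it, while checking that the orthogonal complement $\calJ'$ only ever adds nonnegative real eigenvalues.
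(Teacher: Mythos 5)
Your proof is correct and follows essentially the same route as the paper's: tridiagonalize $H$ with respect to $\mathbf{e}_1$ (so $\Gamma$ becomes $lI_{1\times1}$ sitting in the cyclic block), and apply Proposition~\ref{prConfigurations} to that block while the complementary block only contributes nonnegative real eigenvalues. You spell out two points the paper leaves implicit --- the dichotomy ``positive definite versus exactly one zero eigenvalue'' for the cyclic Jacobi block, and the explicit direct-sum construction for attainability --- but the underlying argument is the same.
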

\begin{remarks}
1. We stress that this is deterministic result.

2. We adopt the convention $\Arg\, 0 =0$ here.

3. Using our methods one can prove a similar statement for the case when $H$ is not positive-semidefinite, but has $s$ negative eigenvalues. The eigenvalues of $H_{eff}$ then belong to $\big\{ (z_j)_{j=1}^n \in (\bar\bbC_+)^n :   \tfrac\pi2+\pi (s-1) < \sum_{j=1}^n \Arg \,z_j  \le \tfrac\pi2+\pi s \big\}$. 
\end{remarks}
\begin{proof}
Just as in Section~\ref{sRankOne}, we can tridiagonalize $H+i\Gamma= V^* (\calJ +  il I_{1\times 1}) V$, where $V$ is unitary, $l>0$, and $\calJ$ some positive semi-definite tridiagonal $n\times n$ matrix~\eqref{jacobi}. Then just apply the previous proposition. Note that some of the $a_j$'s might be zero which is why we obtain non-strict inequalities in $0\le \Arg \, z_j \le \tfrac\pi2$.
\end{proof}


Now that we know the possible configurations of the eigenvalues, we can compute their joint distribution.

\begin{theorem}
For any $0<\beta<\infty$ and any integer $m,n>0$, let $\calJ$ be the $n\times n$ matrix from $L\beta E_{(m,n)}$ ensemble $($see Subsection~\ref{ssBetaEnsembles}$)$ and $l$ be independent of $\calJ$ distributed according to an absolutely-continuous probability distribution $F(l)dl$ on $(0,+\infty)$.

(i) If $m\ge n$, then the eigenvalues $\{z_1,\ldots,z_n\}$ of $\calJ_l = \calJ + ilI_{1\times 1}$ are distributed on~\eqref{configurationSpaceLaguerre1}
according to
\begin{equation}\label{zFinalLaguerre}
\tfrac{1}{q_{\beta,n,a}} \prod_{j,k=1}^n |z_j-\bar{z}_k |^{\tfrac\beta2 -1} \prod_{j<k} |z_j-z_k|^2 e^{-\frac12 \sum_{j=1}^n \re z_j } \big( \re \prod_{j=1}^n z_j \big)^{\tfrac{\beta a}{2}} \frac{F(l)}{l^{\tfrac{\beta n}{2}-1}} d^2 z_1\ldots d^2 z_n,
\end{equation}
where $l=\sum_{j=1}^n \im z_j$, $a=|m-n|+1-2/\beta$,
and
$$
q_{\beta,n,a} = 2^{n(\beta/2-1)}  l_{\beta,n,a} c_{\beta,n},
$$
where $l_{\beta,n,a}$ and $c_{\beta,n}$ are as in~\eqref{normalizations2} and~\eqref{normalizations}.

(ii) If $m\le n-1$, then eigenvalues of $\calJ_l = \calJ + ilI_{1\times 1}$ are $\{z_1,\ldots,z_{m+1},0,\ldots,0\}$ with $\{z_1,\ldots,z_{m+1}\} =:\{r_1 e^{i\theta_1},\ldots,r_{m+1} e^{i\theta_{m+1}}\}$  distributed on~\eqref{configurationSpaceLaguerre2}
according to
\begin{multline}\label{zFinalLaguerre2}
\tfrac{1}{t_{\beta,m,n}} \prod_{j,k=1}^{m+1} |z_j-\bar{z}_k |^{\tfrac\beta2 -1} \prod_{1\le j<k\le m+1} |z_j-z_k|^2 e^{-\frac12 \sum_{j=1}^{m+1} \re z_j }  \prod_{j=1}^{m+1} |z_j|^{\frac{\beta(n-m-1)}{2}} \frac{F(l)}{l^{\tfrac{\beta n}{2}-1}} \\
dr_1\ldots dr_{m+1} d\theta_1\ldots d\theta_m,
\end{multline}
where $l=\sum_{j=1}^{m+1} \im z_j$  and
\begin{equation}\label{normalizationT}
t_{\beta,m,n} = (m+1) 2^{(m+1)(\beta/2-1)}  l_{\beta,m,a} d_{\beta,m,n},
\end{equation}
where $a=|n-m|+1-2/\beta$, and $l_{\beta,m,a}$ and $d_{\beta,m,n}$ are as in~\eqref{normalizations2} and~\eqref{normalizations3}.
\end{theorem}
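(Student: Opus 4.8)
The plan is to follow the proof of the Gaussian theorem (the one ending in~\eqref{zFinal}) step for step: use the deterministic bijection of Proposition~\ref{prConfigurations} to trade the unknown eigenvalues $z_j$ for the spectral data of $\calJ$ and the coupling constant $l$; compute the relevant Jacobian; substitute the explicit joint law of the spectral data (Lemma~\ref{DE4} for $m\ge n$, Proposition~\ref{Pr1} for $m\le n-1$) together with the independent factor $F(l)\,dl$; and finally re-express everything in the $z_j$'s using the symmetric-function identities read off from~\eqref{charPoly}. As always, Theorem~\ref{thmModels} then reads off the $\beta=1,2,4$ cases as the eigenvalue laws of the genuine rank-one non-Hermitian perturbations of $LOE$, $LUE$, $LSE$.

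For \textbf{(i)}, $m\ge n$, almost nothing is new. The Jacobian of $(\lambda_1,\ldots,\lambda_n,w_1,\ldots,w_{n-1},l)\mapsto(z_1,\ldots,z_n)$ is \emph{exactly} the one computed in Lemma~\ref{lmJacobian}, since that computation used only the structure $\calJ_l=\calJ+ilI_{1\times1}$ and~\eqref{charPoly}, never the law of $\calJ$; so it is again $l^{n-1}\prod_{j<k}|\lambda_j-\lambda_k|^2/|z_j-z_k|^2$. Feeding in~\eqref{evLaguerre} in place of the Gaussian weight and the unchanged identities~\eqref{el},~\eqref{prodW}, the computation is word-for-word the Gaussian one except that $\prod_j e^{-\lambda_j^2/2}$ becomes $\prod_j\lambda_j^{\beta a/2}e^{-\lambda_j/2}$, which one handles using $\sum_j\lambda_j=\sum_j\re z_j$ (from~\eqref{realParts}) for the exponential and the new identity $\prod_{j=1}^n\lambda_j=\re\prod_{j=1}^n z_j$ (which is~\eqref{elementaryEqual} with $k=n$) for the power; collecting the powers of $l$ and of $2$ gives~\eqref{zFinalLaguerre} with $q_{\beta,n,a}=2^{n(\beta/2-1)}l_{\beta,n,a}c_{\beta,n}$.

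Case \textbf{(ii)}, $m\le n-1$, is where the work lies. Here $\calJ$ is the nontrivial $(m+1)\times(m+1)$ block with spectral measure $w_0\delta_0+\sum_{j=1}^m w_j\delta_{\lambda_j}$ governed by~\eqref{evLaguerre2}, and one coordinatizes the $(2m+1)$-dimensional surface~\eqref{configurationSpaceLaguerre2} by $(r_1,\ldots,r_{m+1},\theta_1,\ldots,\theta_m)$, $\theta_{m+1}:=\tfrac\pi2-\sum_{j\le m}\theta_j$, against the $2m+1$ spectral coordinates $(\lambda_1,\ldots,\lambda_m,w_1,\ldots,w_m,l)$, $w_0:=1-\sum_{j\ge1}w_j$. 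I would redo the Jacobian along the lines of Lemma~\ref{lmJacobian}, writing $\prod_{j=1}^{m+1}(z-z_j)=\sum_j\kappa_jz^j$: the real part of~\eqref{charPoly} gives $\sum_j\re\kappa_jz^j=z\prod_{j=1}^m(z-\lambda_j)$, whence $\re\kappa_0\equiv0$ on~\eqref{configurationSpaceLaguerre2} and $(\re\kappa_1,\ldots,\re\kappa_m)\leftrightarrow(\lambda_1,\ldots,\lambda_m)$ with Jacobian $\prod_{1\le j<k\le m}|\lambda_j-\lambda_k|$ (independent of $w,l$); the imaginary part, with $w_0$ substituted, becomes $\sum_{j=0}^m\im\kappa_jz^j=-l\prod_{j=1}^m(z-\lambda_j)-l\sum_{j=1}^m w_j\lambda_j\prod_{k\ne j}(z-\lambda_k)$, and since only the $\partial_l$-row carries a $z^m$-term (equal to $-1$, which re-derives $l=\sum_j\im z_j$) the $(w_1,\ldots,w_m,l)$-Jacobian collapses to $\prod_{i=1}^m(l\lambda_i)$ times the coefficient determinant of $\{\prod_{k\ne i}(z-\lambda_k)\}_i$, which has absolute value $\prod_{1\le j<k\le m}|\lambda_j-\lambda_k|$. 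To pass from the $\kappa$-coordinates to $(r_j,\theta_j)$ I would adjoin $\re\kappa_0$ and $u:=\sum_j\theta_j$ as auxiliary variables; on $\{u=\tfrac\pi2\}$ the $\re\kappa_0$-row vanishes but for $\partial_u\re\kappa_0=\pm\prod_j r_j$, so a cofactor expansion (together with the standard facts that the full $z\mapsto\kappa$ Jacobian is $\prod_{j<k}|z_j-z_k|^2$ and the polar Jacobian is $\prod_j r_j$) shows the $(r,\theta)\mapsto\kappa$ Jacobian is $\prod_{1\le j<k\le m+1}|z_j-z_k|^2$; altogether $\bigl|\det\partial(\lambda,w,l)/\partial(r,\theta)\bigr|=\prod_{j<k\le m+1}|z_j-z_k|^2\big/\bigl(l^m\,(\prod_{j=1}^m\lambda_j)\,\prod_{j<k\le m}|\lambda_j-\lambda_k|^2\bigr)$. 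For the substitution one needs $\prod_{j=1}^m\lambda_j=\re s_m(z_1,\ldots,z_{m+1})$ and, since $\prod_jz_j\in i\bbR_+$ on~\eqref{configurationSpaceLaguerre2}, $w_0=\prod_{j=1}^{m+1}|z_j|\big/\bigl(l\prod_{j=1}^m\lambda_j\bigr)$ (both from~\eqref{charPoly}), together with the analogue of~\eqref{prodW}, namely $\prod_{j=1}^m\prod_{k=1}^{m+1}|\lambda_j-z_k|=\prod_{j,k\le m+1}|z_j-\bar z_k|\big/\bigl(2^{m+1}\prod_j|z_j|\bigr)$, obtained from~\eqref{eWeights1} and the case-(ii) analogue of~\eqref{realParts}, $\prod_k(x-z_k)+\prod_k(x-\bar z_k)=2x\prod_j(x-\lambda_j)$, evaluated at $x=z_\ell$. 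Plugging everything in, the powers of $\re s_m(z)$ cancel identically because $\beta a/2=\beta(n-m)/2+\beta/2-1$ (as $a=n-m+1-2/\beta$), the surviving power of $\prod_{j=1}^{m+1}|z_j|$ is $\beta(n-m-1)/2$, the $l$-power is $1-\beta n/2$, and the $2$'s combine to $2^{-(m+1)(\beta/2-1)}$—precisely the integrand of~\eqref{zFinalLaguerre2}, with normalization $2^{(m+1)(\beta/2-1)}l_{\beta,m,a}d_{\beta,m,n}$.

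It remains to explain the extra factor $m+1$ in $t_{\beta,m,n}$ from~\eqref{normalizationT}. The parametrization above labels only $m$ of the $m+1$ nontrivial eigenvalues (via $\lambda_1,\ldots,\lambda_m$), the one tending to $0$ as $l\to0$ being distinguished; read with labels, the bijection of Proposition~\ref{prConfigurations}(ii) therefore yields a density supported on (and integrating to one over) the sub-region of~\eqref{configurationSpaceLaguerre2} in which the distinguished eigenvalue sits in the last slot, which is a $\tfrac1{m+1}$-fraction of the fully symmetric configuration space on which~\eqref{zFinalLaguerre2} is stated. Since the density just obtained is already symmetric in $z_1,\ldots,z_m$, symmetrizing over the $m+1$ possible slots for the distinguished eigenvalue simply divides the normalization by $m+1$, producing $t_{\beta,m,n}$. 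I expect the case-(ii) Jacobian—working on the codimension-one surface $\sum_j\Arg z_j=\tfrac\pi2$, choosing coordinates there, and carrying the constraint $\sum_{j=0}^m w_j=1$ through the determinant—together with the bookkeeping that makes the $\re s_m(z)$-powers cancel and pins down the factor $m+1$, to be the main obstacle; everything else is a routine adaptation of the Gaussian argument.
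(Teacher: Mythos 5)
Your proposal is correct and follows essentially the same route as the paper: change of variables from the spectral data $(\lambda,w,l)$ to the eigenvalues via the bijection of Proposition~\ref{prConfigurations}, the Laguerre spectral laws of Lemma~\ref{DE4}/Proposition~\ref{Pr1}, the unchanged deterministic Jacobian of Lemma~\ref{lmJacobian} in case (i), and the same substitution identities; the exponents, the power of $l$, the powers of $2$, and the factor $m+1$ all check out. The only (harmless) deviation is in case (ii), where you obtain the restricted Jacobian on the surface $\sum_j \Arg z_j=\tfrac\pi2$ by adjoining the auxiliary pair $(\re\kappa_0,\,u=\sum_j\theta_j)$ and doing a cofactor expansion against the known unrestricted Jacobians, rather than the paper's explicit differentiation of elementary symmetric polynomials followed by Gaussian elimination --- an equivalent (arguably cleaner) computation yielding the same factor $\prod_{1\le j<k\le m+1}|z_j-z_k|^2$.
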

\begin{remarks}
1. In view of Theorem~\ref{thmModels}, distributions~\eqref{zFinalLaguerre} and~\eqref{zFinalLaguerre2} with $\beta=1,2,4$ are the eigenvalue distribution of rank one perturbations of the Wishart ensembles $LOE_{(m,n)}$, $LUE_{(m,n)}$, $LSE_{(m,n)}$, respectively.

2. In (ii), $\theta_{m+1} = \pi/2-\sum_{j=1}^m \theta_j$ is implicit due to~\eqref{configurationSpaceLaguerre2}.


\end{remarks}
\begin{proof}
(i) We can take the known joint distribution of the eigenvalues $\lambda_j$'s, eigenweights $w_j$'s (see Lemma~\ref{DE4}), and $l$
and change the variables to $z_j$'s (by Proposition~\ref{prConfigurations}(i) it is one-to-one and onto~\eqref{configurationSpaceLaguerre1}, so the Jacobian~\eqref{jacobian4} applies). Using \eqref{prodW}, \eqref{el}, \eqref{sum}, \eqref{elementaryEqual} (with $k=n$), we obtain the resulting distribution~\eqref{zFinalLaguerre}.

(ii) By Proposition~\ref{prConfigurations}(ii), the map from the spectral measures of the form~\eqref{spectralM2}--\eqref{domain3} to the eigenvalues of $\calJ+il I_{1\times 1}$: $\lambda_1,\ldots,\lambda_m,w_1,\ldots,w_m,l\mapsto z_1,\ldots,z_{m+1}$ is one-to-one and onto~\eqref{configurationSpaceLaguerre2} (if we impose some natural ordering on $\lambda_j$'s and $z_j$'s; we will remove it in the end of the proof). Its Jacobian is of course different from~\eqref{jacobian4} computed earlier. Similar to the notation in the proof of Lemma~\ref{lmJacobian}, let $m(z)=\langle e_1, (\calJ-z)^{-1} e_1\rangle = -\frac{w_0}{z} + \sum_{j=1}^m \frac{w_j}{\lambda_j-z}$ and $\sum_{j=0}^{m+1} \kappa_j z^j = \det(z-\calJ_{l})=\prod_{j=1}^{m+1} (z-z_j) $, where $\kappa_{m+1}=1$. Because of $\det \calJ = 0$, we obtain $\re \kappa_0 = 0$. Following similar reasoning as in the proof of Lemma~\ref{lmJacobian}, we first obtain the value of the Jacobian
\begin{equation}\label{newJac1}
\left|\det \frac{\partial\left(\re \kappa_1,\ldots, \re\kappa_{m},\im\kappa_0,\ldots,\im\kappa_m \right)}{\partial \left(\lambda_1,\ldots,\lambda_m,w_1,\ldots,w_{m},l \right)} \right| = l^m\prod_{j=1}^m \lambda_j \prod_{1\le j<k\le m} |\lambda_j-\lambda_k|^2.
\end{equation}
Now let $z_{j} = r_{j} e^{i\theta_{j}}$ be the polar decomposition of $z_{j}$. Since $\re(z_1\ldots z_{m+1}) = (-1)^{m+1} \re\kappa_0 = 0$, we have that $e^{i\theta_{m+1}}$ is determined by $z_1,\ldots,z_m$. Therefore we have a one-to-one map $\bbR^{2m+1}\to\bbR^{2m+1}$ taking $r_1,\ldots,r_{m+1},\theta_1,\ldots,\theta_{m}$ to $\re \kappa_1,\ldots, \re\kappa_{m},\im\kappa_0,\ldots,\im\kappa_m$. We are headed towards computing its Jacobian on the manifold $\re(z_1\ldots z_{m+1}) =0$.

First off, it is trivial to see that on $\re(z_1\ldots z_{m+1}) =0$:
\begin{equation}\label{newJac2}
\left|\det \frac{\partial\left(\re z_1,\ldots, \re z_m,\im z_1,\ldots,\im z_m, \im \kappa_0 \right)}{\partial \left(r_1,\ldots,r_{m},\theta_1,\ldots,\theta_{m}, r_{m+1} \right)} \right| = \prod_{j=1}^m |z_j|^2.
\end{equation}


We are left with computing the Jacobian of the $\bbR^{2m+1}\to\bbR^{2m+1}$ mapping $\re z_1,\ldots,\re z_m,\im z_1,\ldots,\im z_m, \im \kappa_{0}\mapsto \re \kappa_1,\re \kappa_m,\im \kappa_1,\ldots, \im \kappa_{m}, \im \kappa_0$ restricted to $\re \kappa_0 = 0$, which is easily seen to be equal (cf., e.g.,~\cite[Lemma~D.1]{KK_truncations}) to the Jacobian of the $\bbC^{2m}\times \bbR\to\bbC^{2m}\times \bbR$ map $z_1,\bar z_1,\ldots,z_m,\bar z_m, \im \kappa_{0} \mapsto \kappa_1,\bar \kappa_1,\ldots,\kappa_m,\bar \kappa_m, \im \kappa_{0}$ restricted to $\re \kappa_{0} = 0$, where we treat $z_j$ and $\bar z_j$ as independent variables.
In the notation~\eqref{symmPoly}, we have that $(-1)^{m+1-j}  \kappa_j = s_{m+1-j}(z_1,\ldots,z_{m+1})$ for $0\le j\le m$. Using $\kappa_0 = (-1)^{m+1} z_1\ldots z_{m+1}$ and the trivial equality $s_{m-j}(y_1,\ldots,y_{m}) = s_m(y_1,\ldots,y_{m}) s_{j}(\tfrac{1}{y_1},\ldots,\tfrac{1}{y_{m}}) $, we can write for $1\le j\le m$,
\begin{equation}
(-1)^{m+1-j} \kappa_j =  s_{m+1-j}(z_1,\ldots,z_m) + (-1)^{m+1} \kappa_0 s_{j}(\tfrac{1}{z_1},\ldots,\tfrac{1}{z_m}).
\end{equation}
Since $\bar\kappa_0 = -\kappa_0$, we also get
\begin{equation}
(-1)^{m+1-j} \bar \kappa_j =  s_{m+1-j}(\bar z_1,\ldots,\bar z_m) - (-1)^{m+1} \kappa_0 s_{j}(\tfrac{1}{\bar z_1},\ldots,\tfrac{1}{\bar z_m}).
\end{equation}
These equalities imply
that for $1\le j \le m$ and $1\le s \le m$,
\begin{align*}
\frac{\partial\kappa_j}{\partial z_s} & = (-1)^{m+1-j} s_{m-j}(z_1,\ldots,\hat z_s,\ldots,z_m) - (-1)^j\frac{\kappa_0}{z_s^2} s_{j-1}(\tfrac{1}{z_1},\ldots,\hat{\tfrac{1}{ z_s}},\ldots\tfrac{1}{z_m}), \\
\frac{\partial \bar\kappa_j}{\partial \bar z_s} & = (-1)^{m+1-j} s_{m-j}(\bar z_1,\ldots,\widehat{\bar{z}_s},\ldots,\bar z_m) + (-1)^j\frac{\kappa_0}{{\bar z}_s^2} s_{j-1}(\tfrac{1}{\bar z_1},\ldots,\widehat{\tfrac{1}{ \bar z_s}},\ldots\tfrac{1}{\bar z_m}), \\
\frac{\partial \kappa_j}{\partial \bar z_s} & =  \frac{\partial \bar\kappa_j}{\partial  z_s}  = 0,
\end{align*}
where $\hat v$ means that a variable $v$ is omitted. Using $s_{m-1}(y_1,\ldots, y_{m-1}) s_{j-1}(\tfrac{1}{y_1},\ldots,\tfrac{1}{y_{m-1}}) = s_{m-j}(y_1,\ldots,y_{m-1})$ again, we can rewrite
\begin{align*}
\frac{\partial\kappa_j}{\partial z_s} & =  \frac{z_s-z_{m+1}}{z_s} (-1)^{m+1-j} s_{m-j}(z_1,\ldots,\hat z_s,\ldots,z_m) , \\
\frac{\partial \bar\kappa_j}{\partial \bar z_s} & = \frac{\bar z_s- \bar z_{m+1}}{\bar z_s} (-1)^{m+1-j}   s_{m-j}(\bar z_1,\ldots,\widehat{\bar{z}_s},\ldots,\bar z_m).
\end{align*}
Having this in hand, it is easy to write the Jacobian and perform a straightforward Gaussian elimination to arrive to
\begin{equation}
\left|\det \frac{\partial\left(\kappa_1,\bar \kappa_1,\ldots,\kappa_m,\bar \kappa_m, \im \kappa_0 \right)}{\partial \left(z_1,\bar z_1, \ldots,z_m,\bar z_m,\im\kappa_0 \right)} \right| =  \prod_{j=1}^m |z_j|^{-2} \prod_{1\le j<k\le m+1} |z_j-z_k|^2.
\end{equation}
Combining this with~\eqref{newJac1} and~\eqref{newJac2}, we get
\begin{multline}\label{JacobianLast}
\left| \det\frac{\partial\left(r_1,\ldots,r_{m},\theta_1,\ldots,\theta_{m}, r_{m+1} \right)}{\partial \left(\lambda_1,\ldots,\lambda_m,w_1,\ldots,w_{m},l \right)} \right|
= l^{m} \prod_{j=1}^m |\lambda_j|  \frac{ \prod_{1\le j<k\le m} |\lambda_j-\lambda_k|^2}{\prod_{1\le j<k\le m+1} |z_j-z_k|^2}.
\end{multline}
Repeating the arguments from~\eqref{eWeights1} and~\eqref{prodW}, we obtain
\begin{equation*}
 w_0 = \frac{\prod_{j=1}^{m+1}|z_j|}{l \prod_{j=1}^m|\lambda_j|} ,\quad \mbox{and } \prod_{j=1}^m w_j = \frac{1}{l^{m} 2^{m+1}} \frac{\prod_{j,k=1}^{m+1}|z_j-\bar z_k|}{\prod_{j=1}^{m+1} |z_j| \prod_{j=1}^m |\lambda_j| \prod_{j<k} |\lambda_j-\lambda_k|^2}.
\end{equation*}
Finally, just as in (i), we still have $\sum_{j=1}^m \lambda_j = \sum_{j=1}^{m+1} \re z_j$ and $l=\sum_{j=1}^{m+1} \im z_j$. Now, starting from the joint distribution of $\lambda_1,\ldots,\lambda_m,w_1,\ldots,w_m$ (see Proposition~\ref{Pr1}) and $l$, applying the Jacobian~\eqref{JacobianLast}, and using these substitutions  (note that terms with $\prod |\lambda_j|$ cancel out in the process), we arrive at the distribution~\eqref{zFinalLaguerre2}. Note that the factor $(m+1)$ in~\eqref{normalizationT} comes from removing the ordering of $z_j$'s and $\lambda_j$'s (there are $(m+1)!$ of permutations for $\{z_j\}_{j=1}^{m+1}$, and only $m!$ for $\{\lambda_j\}_{j=1}^m$).
\end{proof}

\subsubsection*{Example}
Choosing $F(l)\propto l^{\tfrac{\beta n}{2}-1} e^{-l/2}$ (as in Example (1) of the previous section, this is natural since corresponds to each entry of $L$ (where $\Gamma = L^* L$) being normal), the distribution~\eqref{zFinalLaguerre} becomes
\begin{equation*}
\propto \prod_{j,k=1}^n |z_j-\bar{z}_k |^{\tfrac\beta2 -1} \prod_{j<k} |z_j-z_k|^2 e^{-\frac12 \sum_{j=1}^n (\re z_j +\im z_j) } \Big( \re \prod_{j=1}^n z_j \Big)^{\tfrac{\beta a}{2}} d^2 z_1\ldots d^2 z_n,
\end{equation*}
and similar simplification can be made for the distribution~\eqref{zFinalLaguerre2}.

\bigskip

\bibliographystyle{amsalpha}
\bibliography{mybib,mybib_rmt}

\def\cprime{$'$} \def\cprime{$'$} \def\cprime{$'$}
  \def\lfhook#1{\setbox0=\hbox{#1}{\ooalign{\hidewidth
  \lower1.5ex\hbox{'}\hidewidth\crcr\unhbox0}}}
\providecommand{\bysame}{\leavevmode\hbox to3em{\hrulefill}\thinspace}
\providecommand{\MR}{\relax\ifhmode\unskip\space\fi MR }
\providecommand{\MRhref}[2]{%
  \href{http://www.ams.org/mathscinet-getitem?mr=#1}{#2}
}
\providecommand{\href}[2]{#2}
\begin{thebibliography}{MRW10}

\bibitem[AT06]{AT}
Yu. Arlinski{\u\i} and E.~Tsekanovski{\u\i}, \emph{Non-self-adjoint {J}acobi
  matrices with a rank-one imaginary part}, J. Funct. Anal. \textbf{241}
  (2006), no.~2, 383--438. \MR{2271925 (2007g:47041)}

\bibitem[DE02]{DE}
I.~Dumitriu and A.~Edelman, \emph{Matrix models for beta ensembles}, J. Math.
  Phys. \textbf{43} (2002), no.~11, 5830--5847. \MR{1936554 (2004g:82044)}

\bibitem[FK99]{FyoKho99}
Y.~V. Fyodorov and B.~A. Khoruzhenko, \emph{Systematic analytical approach to
  correlation functions of resonances in quantum chaotic scattering}, Phys.
  Rev. Lett. \textbf{83} (1999), no.~1, 65--68.

\bibitem[FS96]{FyoSom96}
Y.~V. Fyodorov and H.-J. Sommers, \emph{Statistics of {S}-matrix poles in
  few-channel chaotic scattering: crossover from isolated to overlapping
  resonances}, JETP Letters \textbf{63} (1996), no.~12, 1026--1030.

\bibitem[FS97]{FyoSom97}
\bysame, \emph{Statistics of resonance poles, phase shifts and time delays in
  quantum chaotic scattering: random matrix approach for systems with broken
  time-reversal invariance}, J. Math. Phys. \textbf{38} (1997), no.~4,
  1918--1981, Quantum problems in condensed matter physics. \MR{1450906
  (98k:81297)}

\bibitem[FS03]{FyoSom}
\bysame, \emph{Random matrices close to {H}ermitian or unitary: overview of
  methods and results}, J. Phys. A \textbf{36} (2003), no.~12, 3303--3347,
  Random matrix theory. \MR{1986421}

\bibitem[FS11]{FyoSav}
Y.~V. Fyodorov and D.~V. Savin, \emph{Resonance scattering of waves in chaotic
  systems}, The {O}xford handbook of random matrix theory, Oxford Univ. Press,
  Oxford, 2011, pp.~703--722. \MR{2932654}

\bibitem[KK]{KK_truncations}
R.~Killip and R.~Kozhan, \emph{Matrix models and eigenvalue statistics for
  truncations of classical ensembles of random unitary matrices}, (under
  submission, arXiv:1501.05160).

\bibitem[KN04]{KN}
R.~Killip and I.~Nenciu, \emph{Matrix models for circular ensembles}, Int.
  Math. Res. Not. (2004), no.~50, 2665--2701. \MR{2127367 (2006h:82003)}

\bibitem[MRW10]{NuclearPhys}
G.~E. Mitchell, A.~Richter, and H.~A. Weidenm{\"u}ller, \emph{Random matrices
  and chaos in nuclear physics: nuclear reactions}, Rev. Modern Phys.
  \textbf{82} (2010), no.~4, 2845--2901. \MR{2770945 (2012d:81351)}

\bibitem[OW]{ORoWoo}
S.~O'Rourke and P.~M. Wood, \emph{Spectra of nearly hermitian random matrices},
  (preprint arXiv:1510.00039).

\bibitem[Roc]{Roc15}
J.~Rochet, \emph{Complex outliers of hermitian random matrices}, (preprint
  arXiv:1507.00455).

\bibitem[Rod14]{bRodman}
L.~Rodman, \emph{Topics in quaternion linear algebra}, Princeton Series in
  Applied Mathematics, Princeton University Press, Princeton, NJ, 2014.
  \MR{3241695}

\bibitem[SFT99]{SFT99}
H.-J. Sommers, Y.~V. Fyodorov, and M.~Titov, \emph{{$S$}-matrix poles for
  chaotic quantum systems as eigenvalues of complex symmetric random matrices:
  from isolated to overlapping resonances}, J. Phys. A \textbf{32} (1999),
  no.~5, L77--L87. \MR{1674416}

\bibitem[Sim11]{Rice}
B.~Simon, \emph{Szeg{\H o}'s theorem and its descendants: spectral theory for
  $l{^{2}}$ perturbations of orthogonal polynomials}, M. B. Porter Lectures,
  Princeton University Press, Princeton, NJ, 2011. \MR{2743058 (2012b:47080)}

\bibitem[S{\v{S}}98]{StoSeb}
H.-J. St{\"o}ckmann and P.~{\v{S}}eba, \emph{The joint energy distribution
  function for the {H}amiltonian {$H=H_0-iWW^+$} for the one-channel case}, J.
  Phys. A \textbf{31} (1998), no.~15, 3439--3448. \MR{1625050 (99b:81070)}

\bibitem[SZ89]{SokZel}
V.~V. Sokolov and V.~G. Zelevinsky, \emph{Dynamics and statistics of unstable
  quantum states}, Nuclear Phys. A \textbf{504} (1989), no.~3, 562--588.

\bibitem[Ull69]{Ull69}
N.~Ullah, \emph{On a generalized distribution of the poles of the unitary
  collision matrix}, J. Mathematical Phys. \textbf{10} (1969), 2099--2103.

\end{thebibliography}

\end{document}